\newtheorem{theorem}{Theorem}
\newtheorem{lemma}{Lemma}[section]
\theoremstyle{definition}
\newtheorem{claim}{Claim}[section]
\newtheorem{conjecture}{Conjecture}
\begin{document}

\title{Disproofs of four Gallai-Ramsey-type conjectures}
\author[1]{Yanbo Zhang}
\affil[1]{School of Mathematical Sciences\\ Hebei Normal University\\ Shijiazhuang 050024, China}
\author[2]{Yaojun Chen}
\affil[2]{School of Mathematics\\ Nanjing University\\ Nanjing 210093, China}

\date{}
\maketitle
\let\thefootnote\relax\footnotetext{\emph{Email addresses:} {\tt ybzhang@hebtu.edu.cn} (Yanbo Zhang), {\tt yaojunc@nju.edu.cn} (Yaojun Chen)}
\begin{quote}
{\bf Abstract:}
As a significant variation of Ramsey numbers, the Gallai-Ramsey number $GR_k(H)$ refers to the smallest positive integer $r$ such that, by coloring the edges of $K_r$ with at most $k$ colors, there exists either a monochromatic subgraph isomorphic to $H$ or a rainbow triangle. Mao, Wang, Magnant, and Schiermeyer [Discrete Math., 2023], Song, Wei, Zhang, and Zhao [Discrete Math., 2020], and Zhao and Wei [Discrete Appl. Math., 2021] each proposed one conjecture on the Gallai-Ramsey numbers for fans, wheels, and kipases, respectively. We establish new lower bounds that disprove all three conjectures. Su and Liu [Graphs Combin., 2022] studied the Gallai-Ramsey-full property of graphs and conjectured that a graph is Ramsey-full if and only if it is Gallai-Ramsey-full. We present two classes of graphs that are Ramsey-full, but neither is Gallai-Ramsey-full.

{\bf Keywords:} Ramsey number, Gallai-Ramsey number, Ramsey-full, Gallai-Ramsey-full

{\bf AMS Subject Classification:} 05C55, 05D10
\end{quote}

\section{Introduction}

We use $[q,k]$ to represent the set $\{q, q+1, \ldots, k\}$, and abbreviate $[1,k]$ as $[k]$. Given $k$ graphs $G_1, \ldots, G_k$, the \emph{Ramsey number} $R(G_1, G_2, \ldots, G_k)$ is defined as the smallest positive integer $n$ such that for any edge coloring $\tau: E(K_n) \to [k]$, there exists a monochromatic subgraph $G_i$ of color $i$ for some $i \in [k]$.

When $k=2$ and both $G_1$ and $G_2$ are complete graphs, we refer to $R(K_p, K_q)$ as the classical Ramsey number, often abbreviated as $R(p, q)$. Currently, only nine exact values of non-trivial classical Ramsey numbers are known: $R(3, q)$ for $q \in [3, 9]$, $R(4, 4)$, and $R(4, 5)$. The most famous unresolved problem is $R(5, 5)$, which gained further popularity due to a famous joke by Erd\H{o}s~\cite{Erdos1985}. The best known bounds are 

\[
43 \le R(5, 5) \le 46\,.
\]

The lower bound was established by Exoo~\cite{Exoo1989}, while the upper bound was recently obtained by Angeltveit and McKay~\cite{Angeltveit2024}, utilizing 80 years of CPU time. McKay and Radziszowski~\cite{McKay1997} conjectured that the lower bound is the exact value and provided some experimental evidence.

\begin{conjecture}[McKay and Radziszowski, 1997]
  $R(5, 5) = 43$.
\end{conjecture}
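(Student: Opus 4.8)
The plan is to split the equality $R(5,5)=43$ into the two inequalities $R(5,5)\ge 43$ and $R(5,5)\le 43$ and to treat them entirely separately, since their difficulty is wildly asymmetric. The lower bound is, in principle, already in hand: it suffices to exhibit a single $2$-coloring of $K_{42}$ with no monochromatic $K_5$, equivalently a $(5,5;42)$-graph, i.e.\ a graph on $42$ vertices with no $K_5$ and no independent set of size $5$. Exoo's cyclic construction supplies such a graph, so I would simply verify, by a direct clique and independent-set search, that the relevant circulant on $42$ vertices is both $K_5$-free and $\overline{K_5}$-free. This settles $R(5,5)\ge 43$ unconditionally and reduces the whole problem to the upper bound.

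For the upper bound I would show that no $(5,5;43)$-graph exists; since deleting vertices from any $(5,5;n)$-graph with $n\ge 43$ yields one on $43$ vertices, this single nonexistence statement already gives $R(5,5)\le 43$. The main structural tool is the neighborhood decomposition. Fix a hypothetical $(5,5;43)$-graph $G$ and a vertex $v$. The subgraph induced on the neighbors of $v$ contains no $K_4$ (else, together with $v$, it would form a $K_5$) and no independent set of size $5$, so it is a $(4,5)$-graph; dually, the non-neighbors of $v$ induce a $(5,4)$-graph. Because $R(4,5)=R(5,4)=25$, every vertex has degree at most $24$ and non-degree at most $24$; combined with $d(v)+\overline{d}(v)=42$, this pins each degree into the narrow window $[18,24]$. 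I would then sharpen this by double counting edges and by invoking the catalogued lists of all $(4,5;m)$- and $(5,4;m)$-graphs for the feasible orders $m$.

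The core of the argument is computational and follows the gluing paradigm of McKay--Radziszowski and Angeltveit--McKay. Each vertex of the putative $G$ prescribes a pair consisting of a $(4,5)$-graph on its neighborhood and a $(5,4)$-graph on its non-neighborhood, and these local pieces must be mutually consistent across all $43$ vertices. Concretely I would: (i) generate, up to isomorphism, the complete catalogue of $(4,5)$- and $(5,4)$-graphs of every admissible order, using orderly generation with \texttt{nauty}-style isomorph rejection; (ii) for each candidate neighborhood graph, enumerate the ways it can be coned and then extended one vertex at a time, pruning aggressively with the degree and edge-count constraints above and with the requirement that every newly induced neighborhood again lie in the catalogue; and (iii) verify that no branch survives to $43$ vertices. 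In practice one builds upward, first pinning down the finite (but large) set of $(5,5;42)$-graphs and then proving that none of them admits a cone to a $(5,5;43)$-graph.

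The hard part is precisely step (iii), and it is the reason the conjecture remains open. The number of nonisomorphic Ramsey graphs explodes as the order approaches the extremal value, and the present frontier --- the bound $R(5,5)\le 46$ of Angeltveit and McKay --- already consumed on the order of $80$ CPU-years. Closing the remaining gap from $46$ down to $43$ means ruling out $(5,5;43)$-graphs directly, a search several orders of magnitude larger than anything yet completed, unless one first proves a genuinely new structural theorem that collapses the search space (for instance, a strong regularity or symmetry constraint forcing any near-extremal $(5,5)$-graph into a tiny, explicitly describable family). I do not expect brute-force enumeration with current hardware and pruning to suffice; the realistic route to the conjecture is a structural reduction that renders the final computation tractable. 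Absent such a reduction, the plan establishes the lower bound unconditionally and reduces the full conjecture to a single, sharply delimited --- but at present computationally out-of-reach --- nonexistence statement.
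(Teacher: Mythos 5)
The statement you were asked about is a \emph{conjecture}, not a theorem: the paper states it as the McKay--Radziszowski conjecture and offers no proof, because none exists. The current state of knowledge, as the paper records, is $43 \le R(5,5) \le 46$, with the lower bound due to Exoo and the upper bound due to Angeltveit and McKay. Your proposal does not close this gap, and you say so yourself: the entire upper-bound half, namely the nonexistence of a $(5,5;43)$-graph, is left as a ``sharply delimited --- but at present computationally out-of-reach --- nonexistence statement.'' That is not a step you defer for routine verification; it is the whole content of the conjecture. Ruling out $(5,5;43)$-graphs is strictly harder than the Angeltveit--McKay computation that established $R(5,5)\le 46$, which already required decades of CPU time, and no structural theorem of the kind you hope for (one that collapses the search space) is known.

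To be clear about what is and is not sound in your write-up: the lower-bound half is correct and standard (Exoo's circulant $(5,5;42)$-graph gives $R(5,5)\ge 43$, and this is exactly the evidence the paper cites), and your description of the neighborhood decomposition --- every vertex of a putative $(5,5;43)$-graph has its neighborhood inducing a $(4,5)$-graph and its non-neighborhood inducing a $(5,4)$-graph, whence by $R(4,5)=25$ every degree lies in $[18,24]$ --- accurately reflects the gluing paradigm used in the literature. But a correct research plan plus an honest admission that its decisive step cannot currently be carried out is not a proof. The conjecture remains open, and your proposal, like the paper, leaves it that way.
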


Ramsey theory has developed many variations. One such variation restricts the way of coloring, making it relatively easier to study the multicolor case. An edge coloring $\tau: E(G) \to [k]$ is called a Gallai coloring of the graph $G$ if it does not contain a rainbow triangle, meaning no triangle has all three edges colored with different colors. This type of coloring was first studied by Gallai in 1967 \cite{Gallai1967}, who established a significant theorem in this field.

\begin{theorem}\label{Gallai}
    Let $(G, \tau)$ be a Gallai $k$-colored complete graph with $|V(G)| \ge 2$. Then, the vertex set $V(G)$ can be partitioned into nonempty sets $V_1, \dots, V_p$ with $p \ge 2$ such that at most two colors are used on the edges in $E(G) \setminus (E(V_1) \cup \cdots \cup E(V_p))$, and only one color is used on the edges between any fixed pair $(V_i, V_j)$ under $\tau$, where $E(V_i)$ denotes the set of edges with both ends in $V_i$ for all $i \in [p]$. 
\end{theorem}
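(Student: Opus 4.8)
The plan is to prove Theorem~\ref{Gallai} by induction on $n=|V(G)|$, building the partition out of the connected components of a single color class. The cases $n\le 2$ are immediate, since two singletons already form the required partition. If $\tau$ uses at most two colors on all of $E(G)$, then the partition into $n$ singletons works: every pair of parts is joined by a single edge, hence by one color, and only two colors appear in total. So the entire difficulty concentrates in the case $n\ge 3$ with at least three colors present, where the goal is to coarsen the singleton partition until only two colors survive between the parts.

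The engine of the induction is the following observation, which I would isolate as a lemma. Suppose some color $c$ has disconnected color graph $G_c$ (the spanning subgraph of $c$-edges), with components $C_1,\dots,C_t$, $t\ge 2$. Then between any two components all edges receive the same color. Indeed, fix $z\in C_j$ and take $y,y'\in C_i$ with $\tau(yy')=c$; since $y,y'$ lie in a different component from $z$, both $yz$ and $y'z$ avoid color $c$, so the triangle $yy'z$ is non-rainbow only if $\tau(yz)=\tau(y'z)$. Propagating this equality along a $c$-path inside the connected set $C_i$ shows that all of $C_i$ sends one color to $z$, and the symmetric argument over $C_j$ makes the whole $C_i$--$C_j$ join monochromatic. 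Thus $\{C_1,\dots,C_t\}$ is a partition into $t\ge 2$ parts with monochromatic joins, and since $c$ is used some component is nontrivial, so $t<n$. The reduced complete graph on the $t$ components inherits a Gallai coloring, because a rainbow triangle there would lift to one in $G$ by choosing one representative vertex per component. Applying the induction hypothesis to this smaller graph yields a Gallai partition of it; replacing each of its parts by the union of the corresponding components gives a partition of $V(G)$ into at least two parts with monochromatic pairwise joins using at most two colors in total, which is exactly what is required.

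Granting the engine, the whole theorem reduces to a single structural claim: whenever a Gallai coloring uses at least three colors, some color class is disconnected. I expect this to be the main obstacle. It is equivalent to asserting that a Gallai coloring in which every color class is connected and spanning uses at most two colors. A preliminary reduction sharpens the claim: working with a coarsest partition into modules (sets seen monochromatically from outside), one checks that its quotient is either a single edge, trivially one color, or is \emph{prime}, having no nontrivial module. Applying the engine inside the quotient then shows that in the prime case every used color $c$ has both $G_c$ and its complement connected, since a disconnected $G_c$ produces a nontrivial module via the lemma above, and a disconnected complement produces one whose joins are all color $c$. The remaining task is to rule out three or more colors under these strong connectivity constraints.

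For this crux I would either invoke Gallai's original decomposition of comparability graphs through transitive orientations, from which the two-color property of the irreducible quotient follows, or attempt a self-contained extremal argument: take a minimal counterexample and delete a carefully chosen vertex so that induction applies to the remaining colored complete graph, tracking which color classes can lose connectivity. The delicate point is that deleting a vertex may simultaneously destroy the connectivity of a color class and remove a color altogether, so the vertex must be chosen to avoid being a cut vertex in every color at once; controlling this interaction is where I expect the real work to lie.
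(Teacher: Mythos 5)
Your scaffolding is sound, and it is worth saying that the paper itself offers no proof to compare against: Theorem~\ref{Gallai} is stated there as Gallai's classical result, with a citation to Gallai (1967) and the Maffray--Preissmann translation, so your argument has to stand entirely on its own. The parts you actually prove do stand: the ``engine'' lemma is correct (the propagation along $c$-paths works, and singleton components of $G_c$ are handled trivially), the quotient graph on the components of a disconnected color class inherits a Gallai coloring, the count $t<n$ is right, and lifting a Gallai partition of the quotient back to $V(G)$ is legitimate. This is in fact the standard inductive framework: the components of a disconnected color class are modules, and one recurses on the quotient.

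The genuine gap is that the statement you defer --- every Gallai coloring using at least three colors has a disconnected color class --- is not a stepping stone toward the theorem; given your engine, it is \emph{equivalent} to it. One direction is your reduction. Conversely, if $V_1,\dots,V_p$ is a Gallai partition with $p\ge 2$ and at least three colors are used, then some used color is absent from the joins, so all of its edges lie inside parts and its class is disconnected as a spanning subgraph. Hence all of the difficulty of Gallai's theorem is concentrated precisely in the claim you leave open, and neither of your proposed routes closes it: invoking ``Gallai's original decomposition of comparability graphs through transitive orientations'' is a citation of the very result under proof, not an argument; and the minimal-counterexample sketch stalls exactly where you flag it --- you give no rule for selecting a vertex that is not a cut vertex of any color class, no reason such a vertex exists, and no way to repair the induction when deleting a vertex kills a color outright. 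In the prime case (every color class and every complementary class connected, no nontrivial module), excluding a third color is where Gallai's transitive-orientation machinery, or the later modular-decomposition proofs, do real work, and nothing in your proposal substitutes for it. As written, you have a correct reduction of the theorem to an equivalent restatement of itself, not a proof.
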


Accordingly, we define the Ramsey number under this restricted coloring scheme. The \emph{Gallai-Ramsey number} $GR(G_1, \ldots, G_k)$ is the smallest positive integer $N$ such that for any Gallai coloring $\tau: E(K_N) \to [k]$, there exists a subgraph of color $i$ isomorphic to $G_i$ for some $i \in [k]$. When $G_1 = \cdots = G_k = G$, we abbreviate it as $GR_k(G)$.

Although we know almost nothing about the multicolor Ramsey numbers for complete graphs, the Gallai-Ramsey numbers for complete graphs may behave nicely. Fox, Grinshpun, and Pach~\cite{Fox2015} proposed the following conjecture.

\begin{conjecture}[Fox, Grinshpun, and Pach, 2015]
  For positive integers $k$ and $p$,
  \begin{equation*}
    GR_k(K_p)= 
    \begin{cases} 
    (R(K_p)-1)^{k/2}+1, & \text{if } k \text{ is even};\\
    (p-1)(R(K_p)-1)^{(k-1)/2}+1, & \text{if } k \text{ is odd}.
    \end{cases}
    \end{equation*}
\end{conjecture}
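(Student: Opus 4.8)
The plan is to prove the two inequalities separately: the lower bound $GR_k(K_p)\ge N_k$ by an explicit extremal Gallai coloring, and the upper bound $GR_k(K_p)\le N_k$ by induction on $k$ using Gallai's structure theorem (Theorem~\ref{Gallai}), where $N_k$ denotes the claimed value and $R:=R(K_p)=R(p,p)$ is the ordinary two-color Ramsey number. The base cases are immediate and match the formula: with one color a monochromatic $K_p$ first appears at $p$ vertices, so $GR_1(K_p)=p=(p-1)+1$; with two colors no rainbow triangle can occur at all, so a Gallai $2$-coloring is an arbitrary $2$-coloring and $GR_2(K_p)=R(p,p)=(R-1)+1$.

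For the lower bound I would build an extremal coloring $H_k$ by iterated substitution. Set $H_0=K_1$ and let $H_1=K_{p-1}$ in the single color $1$. Given $H_{k-2}$ on colors $[k-2]$ with no monochromatic $K_p$, fix an optimal $2$-coloring of $K_{R-1}$ in two fresh colors $k-1,k$ that avoids a monochromatic $K_p$ (which exists by definition of $R=R(p,p)$), and replace each of its $R-1$ vertices by a disjoint copy of $H_{k-2}$, coloring every edge between two copies with the color the reduced $2$-coloring assigns to the corresponding pair. Every triangle of $H_k$ either lies inside one copy (Gallai by induction), meets three copies (its edges take at most the two reduced colors), or meets two copies in the pattern $2{+}1$ (its two cross edges share one color); hence $H_k$ is a Gallai coloring. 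A color-$c$ clique with $c\in[k-2]$ lives inside a single copy, while one with $c\in\{k-1,k\}$ meets each copy at most once; in both cases it has fewer than $p$ vertices, so $H_k$ contains no monochromatic $K_p$. Counting yields $|H_k|=(R-1)\,|H_{k-2}|$, and unwinding from $|H_0|=1$, $|H_1|=p-1$ gives exactly $|H_k|=N_k-1$, i.e.\ $GR_k(K_p)\ge N_k$.

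For the upper bound I would take a Gallai $k$-coloring of $K_{N}$ with $N=N_k$, assume there is no monochromatic $K_p$, and apply Theorem~\ref{Gallai} to obtain a partition $V_1,\dots,V_m$ with $m\ge2$ whose reduced complete graph $K_m$ is $2$-colored, say in colors $a$ and $b$. Picking one representative per part shows that a monochromatic $K_p$ in the reduced coloring would lift to one in $K_N$, so the reduced $2$-coloring avoids $K_p$ and therefore $m\le R-1$. The target is then a recurrence of the shape $N\le(R-1)\cdot g(k-2)$, where $g(k-2)=GR_{k-2}(K_p)-1$ is the extremal size on $k-2$ colors; together with the base cases this would close the induction, provided one can argue that each part $V_i$ contributes at most $g(k-2)$ vertices once the two reduced colors $a,b$ on the cross edges have been accounted for.

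The main obstacle is precisely this last step. Gallai's theorem guarantees that only two colors appear \emph{between} the parts, but it does not prevent those same colors $a,b$ from recurring \emph{inside} the parts, so a part need not be a genuine $(k-2)$-colored object and the clean ``two-colors-at-a-time, factor $R-1$'' reduction fails in general. Pushing the induction through requires a finer, weighted bookkeeping of how $a$ and $b$ can be reused within parts while still avoiding a monochromatic $K_p$ across part boundaries, and it is exactly this interaction that resists analysis. I would therefore expect to establish the upper bound—and hence the full conjecture—cleanly only for $p=3$, where $R(3,3)=6$ makes the arithmetic of the reduction tractable by direct case analysis, with the general case $p\ge4$ remaining the genuinely hard, still open core of the statement.
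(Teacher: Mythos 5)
You should first note the status of this statement: it is a \emph{conjecture} in the paper (due to Fox, Grinshpun, and Pach), not a theorem, and the paper offers no proof of it. It is known for $p=3$ (Chung--Graham, reproved by Gy\'arf\'as, S\'ark\"ozy, Seb\H{o}, and Selkow) and for $p=4$ (Liu, Magnant, Saito, Schiermeyer, and Shi), while for $p=5$ only partial progress exists and the value is tied to the unknown $R(5,5)$ --- indeed the paper points out that this conjecture and the McKay--Radziszowski conjecture $R(5,5)=43$ cannot both be true. So there is no ``paper's own proof'' to compare against, and no complete proof of the general statement exists in the literature.

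Measured against that, your proposal is honest and largely sound as far as it goes, but it is not a proof, and you say so yourself. The lower-bound half is correct and is the standard construction: substituting copies of $H_{k-2}$ into an extremal $2$-coloring of $K_{R-1}$ in two fresh colors preserves the Gallai property, excludes monochromatic $K_p$ exactly as you argue, and the recursion $|H_k|=(R-1)|H_{k-2}|$ with $|H_0|=1$, $|H_1|=p-1$ gives the claimed values. The genuine gap is the upper bound, and you have located the obstruction precisely: Gallai's theorem only controls the colors \emph{between} the parts of the partition, so the two reduced colors $a,b$ may reappear inside parts, a part is not a bona fide $(k-2)$-colored complete graph, and the clean factor-of-$(R-1)$ recursion $GR_k \le (R-1)(GR_{k-2}-1)+1$ does not follow. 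This is exactly why the statement is still open: for $p=3$ the interaction can be tamed (a vertex sees at most $p-1=2$ vertices of any part in a fixed color, which is what makes the Gy\'arf\'as et al.\ argument work), and for $p=4$ it already required a long, delicate analysis rather than the naive reduction. One small correction to your closing assessment: the case $p=4$ is settled, not open; what remains open is $p\ge 5$. In short, your proposal correctly reproduces the easy half of the conjecture and correctly diagnoses why the hard half resists the natural induction, but it cannot be completed into a proof of the statement --- nor could any referee expect it to be, since the statement is an open conjecture.
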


For $p=3$, this result was first established by Chung and Graham~\cite{Chung1983} and was later reproved by Gy\'arf\'as, S\'ark\"ozy, Seb\H{o}, and Selkow~\cite{Gyarfas2010} using a simpler approach. For $p=4$, the conjecture was confirmed by Liu, Magnant, Saito, Schiermeyer, and Shi~\cite{Liu2020}. In the case of $p=5$, significant progress was made by Magnant and Schiermeyer~\cite{Magnant2022}, although the precise value remains unresolved. Furthermore, they highlighted a notable tension between the conjecture proposed by Fox, Grinshpun, and Pach~\cite{Fox2015} and the one by McKay and Radziszowski~\cite{McKay1997}, suggesting that only one of these conjectures can hold true.

Now let us shift our focus to the Gallai-Ramsey numbers of sparse graphs. Currently, only a few exact values of multicolor Ramsey numbers for sparse graphs are known. However, when restricted to the framework of Gallai coloring, many corresponding Gallai-Ramsey numbers can achieve exact values or meaningful bounds. This provides a new perspective for understanding the multicolor Ramsey numbers of sparse graphs.

We study three classes of graphs that all contain a central vertex. The graph $K_1 + G$ is constructed by adding an extra vertex to a graph $G$, where this extra vertex is connected to every vertex of $G$ by an edge. This extra vertex is referred to as the central vertex. When $G$ is a matching, $K_1 + nK_2$ is called a fan, denoted as $F_n$. When $G$ is a path with $n$ vertices, $K_1 + P_n$ is called a kipas, denoted as $\widehat{K}_n$. When $G$ is a cycle with $n$ vertices, $K_1 + C_n$ is called a wheel, denoted as $W_n$.

For the Gallai-Ramsey number of fans, Mao, Wang, Magnant, and Schiermeyer~\cite{Mao2023} calculated the exact value of $GR_k(F_2)$ and provided a lower bound for $GR_k(F_3)$. They conjectured that this lower bound is indeed the exact value.
\begin{conjecture}
  For $k \ge 2$,
  \begin{equation*}
    GR_k(F_3) = 
    \begin{cases} 
    14 \cdot 5^{\frac{k-2}{2}}-1, & \text{if } k \text{ is even};\\
    33 \cdot 5^{\frac{k-3}{2}}, & \text{if } k \text{ is odd}.
    \end{cases}
    \end{equation*}
\end{conjecture}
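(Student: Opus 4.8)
The plan is to prove the claimed equality by establishing matching lower and upper bounds for $GR_k(F_3)$, treating the two parities of $k$ separately and anchoring the induction at the two base cases $k=2$ and $k=3$. Write $N=N(k)$ for the conjectured value. Since $F_3=K_1+3K_2$ has a unique central vertex of full degree, a monochromatic $F_3$ of color $c$ amounts to a vertex $v$ together with a matching of size three using color-$c$ edges inside the color-$c$ neighborhood of $v$; this reformulation will drive both bounds.

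For the lower bound $GR_k(F_3)\ge N$, I would exhibit an explicit Gallai $k$-coloring of $K_{N-1}$ with no monochromatic $F_3$. The construction is the standard iterated substitution (blow-up): anchor the even chain at an extremal $2$-coloring realizing $R(F_3,F_3)=13$ on $12$ vertices, and the odd chain at a dedicated extremal $3$-coloring on $32$ vertices; then pass from $k-2$ to $k$ by substituting copies of the current extremal coloring into a fixed two-colored gadget (using two fresh colors on the cross-edges), together with a bounded number of extra vertices chosen so that the total realizes the recursion $GR_k=5\,GR_{k-2}+4$ for even $k$ and $GR_k=5\,GR_{k-2}$ for odd $k$. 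The verification is that substitution creates no new monochromatic $F_3$: a fan's centre must lie inside a single substituted part, so its three blades are either all interior (excluded by the inductive design of the part) or split across the gadget in a way the gadget is built to forbid. Pinning down the gadget so that it yields exactly the factor $5$ and the correct additive constant is the delicate point here.

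For the upper bound $GR_k(F_3)\le N$, I would induct on $k$ through Gallai's structure theorem. Given a Gallai $k$-coloring of $K_N$ with no monochromatic $F_3$, apply Theorem~\ref{Gallai} to get a partition $V_1,\dots,V_p$ whose reduced complete graph on the parts uses only two colors, say $1$ and $2$. The heart of the argument is a weighted two-color analysis of this reduced graph: weight each part by its size, and observe that if the part containing a vertex $v$ is joined in one reduction color to a collection of parts that already carry three independent edges of that same color, then $v$ is the centre of a monochromatic $F_3$. This forbids the reduction colors from spreading too widely, which both bounds the number of parts $p$ and forces colors $1$ and $2$ to be effectively exhausted between parts; inside each large part at most $k-2$ colors then remain available, so the inductive hypothesis caps the part sizes. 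Balancing the bound on $p$ against the bound on part sizes is what should reproduce the factor $5$ and the exact additive corrections.

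The main obstacle is the upper bound, and specifically its tightness. The whole difficulty is to control the number of parts, the split of the two reduction colors between the reduced graph and the part interiors, and the merging of parts into monochromatic blocks simultaneously, without conceding a single vertex against $N(k)$. Reproducing the additive term $+4$ in the even recursion and its absence in the odd recursion forces a case-heavy treatment of how a fan's three blades can straddle the Gallai partition in colors $1$ and $2$, and the anchor case $GR_3(F_3)=33$ needs a self-contained three-color argument that does not descend from the generic step. It is exactly in matching this constant that a putative proof is most fragile: any loss in the reduced-graph count propagates through the induction and separates the bound from the conjectured value.
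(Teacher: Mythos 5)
Your proposal sets out to \emph{prove} the conjectured equality, but this statement is precisely one of the conjectures the present paper \emph{disproves}: no correct proof can exist, at least for even $k$. The failure begins at your anchor. You assume $R(F_3,F_3)=13$ with a $12$-vertex extremal $2$-coloring; in fact $R(F_3,F_3)=14$, by the result of Zhao and Wei cited in Section~\ref{secfan}, so already at $k=2$ the conjecture is false, since $GR_2(F_3)=R(F_3,F_3)=14\neq 13=14\cdot 5^{0}-1$. Worse, for every even $k\ge 4$ the paper constructs Gallai $k$-colorings beating the conjectured value by two vertices. One takes the $13$-vertex extremal graph for $R(F_3,F_3)$ and splits one vertex into a single edge of a \emph{new} color $i\in\{3,4\}$, producing $14$-vertex graphs $G_3^3$ and $G_3^4$ on colors $\{1,2,i\}$ with no monochromatic $F_3$; substituting three copies of $G_3^3$ and two copies of $G_3^4$ into the two-colored $K_5$ (colors $3$ and $4$ forming the pentagon and pentagram) yields a Gallai $4$-coloring of $K_{70}$ with no monochromatic $F_3$, whence $GR_4(F_3)\ge 71 > 69 = 14\cdot 5 - 1$. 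Iterating the pentagon substitution with two fresh colors per step gives $GR_k(F_3)\ge 14\cdot 5^{\frac{k-2}{2}}+1$ for even $k\ge 4$, strictly above the conjectured value.

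The conceptual point your substitution analysis misses is exactly the slack that kills the conjecture: you tacitly assume the substituted parts use only the old colors, so that the two fresh cross-colors remain fan-free automatically. But the fresh colors may also appear \emph{inside} the substituted copies, provided they are sparse enough --- here a single color-$3$ edge per copy of $G_3^3$ --- because any color-$3$ triangle must then use one of these interior edges, and a color-$3$ fan $F_3$ would require three independent such edges drawn from three distinct copies, which the relative positions of the copies in the pentagon forbid. That one extra edge per copy is what pushes the vertex count past $14\cdot 5^{\frac{k-2}{2}}-1$. Consequently your upper-bound induction via the Gallai partition cannot be repaired: the step you yourself flag as ``most fragile'' --- matching the additive constant exactly --- is not merely delicate but impossible, since extremal colorings on $14\cdot 5^{\frac{k-2}{2}}$ vertices exist. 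Relative to this paper, the correct task for this statement is a refutation, carried out in Section~\ref{secfan}; whether the conjectured odd-$k$ values survive is not settled there, but the conjecture as stated is false.
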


When $k$ is even, we provide a new lower bound for $GR_k(F_3)$, thus disproving this conjecture.
\begin{theorem}
  For an even integer $k$,
  \begin{equation*}
    GR_k(F_3) \ge 
    \begin{cases} 
    14, & \text{if } k = 2;\\
    14 \cdot 5^{\frac{k-2}{2}} + 1, & \text{if } k \ge 4.
    \end{cases}
    \end{equation*}
\end{theorem}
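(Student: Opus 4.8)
The plan is to produce, for each even $k$, an explicit Gallai $k$-coloring of the complete graph on one fewer vertex than the claimed bound in which no color class contains $F_3$; the stated inequalities then follow immediately. Throughout I will use the elementary reformulation that, since $F_3 = K_1 + 3K_2$ has a dominating vertex, a graph $G$ contains $F_3$ if and only if some vertex $v$ satisfies $\nu(G[N(v)]) \ge 3$, where $\nu$ is the matching number. Thus avoiding a monochromatic $F_3$ in a color $c$ amounts exactly to guaranteeing that, for every vertex $v$, the color-$c$ neighborhood of $v$ spans no three independent edges. This local criterion is what I would check at every stage.

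For $k = 2$ every $2$-coloring is automatically a Gallai coloring, so it suffices to $2$-color $K_{13}$ with no monochromatic $F_3$. I would take a circulant coloring on $\mathbb{Z}_{13}$: fix a symmetric difference set $S \subseteq \{1,\dots,6\}$ and color an edge $\{i,j\}$ with color $1$ when $j-i \pmod{13} \in S$ and with color $2$ otherwise. By vertex-transitivity it is enough to verify the matching criterion at the single vertex $0$ in each color, reducing the entire case to a finite check that the color-$1$ and color-$2$ neighborhoods of $0$ each miss a $3K_2$. Choosing $S$ so that each color class is a union of a bounded number of ``intervals'' of differences keeps these neighborhoods matching-sparse, and locating one suitable $S$ is a short search.

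The engine for $k \ge 4$ is a substitution step. Starting from the pentagon $2$-coloring of $K_5$ (each color a $C_5$), in which both color classes are triangle-free, I would replace every vertex by a disjoint copy of a Gallai $(k-2)$-coloring $\Gamma$ on $b$ vertices that avoids monochromatic $F_3$, coloring all edges between two copies by the color of the corresponding pentagon edge and reserving a fresh pair of colors for the pentagon. The result is Gallai: a triangle meeting three copies uses only the two pentagon colors, a triangle meeting two copies has its two crossing edges equally colored, and a triangle inside one copy is Gallai by assumption. For each pentagon color $c$ the color-$c$ neighborhood of any vertex is the union of the two copies corresponding to the $C_5$-neighbors of its own super-vertex; since those two neighbors are nonadjacent in $C_5$ and the copies carry no color $c$ internally, this neighborhood spans no edge at all, let alone a $3K_2$. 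Hence every monochromatic $F_3$ is confined to a single copy and excluded by the choice of $\Gamma$, giving $GR_k(F_3) - 1 \ge 5\,(GR_{k-2}(F_3) - 1)$.

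It remains to feed the recursion a base at $k = 4$ strong enough to reach $14 \cdot 5^{(k-2)/2}$ rather than the $13 \cdot 5^{(k-2)/2}$ delivered by the $k=2$ block through pure substitution. The crux, and the step I expect to be the main obstacle, is a direct Gallai $4$-coloring of $K_{70}$ with no monochromatic $F_3$: a plain pentagon blow-up of $2$-colored blocks cannot exceed $5 \times 13 = 65$, because a block's internal colors are disjoint from the pentagon colors and any $F_3$ living inside a block survives untouched by the global structure. I would instead enlarge each block to $14$ vertices and recolor a few of its internal edges with the two pentagon colors, chosen so as to destroy every internal $F_3$ while keeping each pentagon color class triangle-free across the whole graph. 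Verifying that no new monochromatic $F_3$ is created in any of the four colors—applying the matching-number criterion simultaneously to the two global pentagon colors and the two block colors—is the delicate part, and it may well force an explicit, hand-tailored coloring rather than a clean closed formula. Granting this base, $GR_4(F_3) \ge 71$, and iterating the substitution step yields $GR_k(F_3) - 1 \ge 5^{(k-4)/2}\cdot 70 = 14 \cdot 5^{(k-2)/2}$ for every even $k \ge 4$, as required.
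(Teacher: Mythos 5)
Your substitution step for even $k \ge 6$ is correct and is exactly the paper's recursion, and you rightly observe that pure substitution from the $k=2$ block only reaches $5\cdot 13 = 65$ vertices. The problem is that the entire theorem for even $k \ge 4$ then hinges on the base case $GR_4(F_3) \ge 71$, which you explicitly assume rather than prove (``Granting this base\dots''). That 70-vertex Gallai $4$-coloring is precisely the paper's main construction, not a routine verification: the paper first builds a 14-vertex block $G_3^i$ ($i \in \{3,4\}$) by blowing up the pentagon $2$-coloring, putting color-$1$ triangles at $v_2,v_5$, color-$2$ triangles at $v_3,v_4$, and a single edge of color $i$ at $v_1$; then $G_4$ blows up the pentagon in colors $3,4$, placing copies of $G_3^3$ at positions $v_1,v_3,v_4$ and copies of $G_3^4$ at $v_2,v_5$. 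Since the blown-up $C_5$ is triangle-free, every color-$3$ triangle must use one of the only three internal color-$3$ edges (one per copy of $G_3^3$), and a color-$3$ fan $F_3$ would need three such triangles through a common center; but the admissible centers for the triangles using the edges in $V_1$, $V_3$, $V_4$ lie in $V_1\cup V_2\cup V_5$, $V_2\cup V_3\cup V_4$, and $V_3\cup V_4\cup V_5$ respectively, and these three sets have empty intersection. Color $4$ is easier: it has only two internal edges, fewer than the three required. Note moreover that the route you sketch for the base---recoloring internal block edges with pentagon colors \emph{while keeping each pentagon color class triangle-free}---is self-defeating: an internal edge of a pentagon color inside a block forms a monochromatic triangle with every vertex of the two blocks joined to that block in the same color, so global triangle-freeness would force every block to avoid both pentagon colors entirely, i.e.\ to be $2$-colored, capping the construction at $5 \cdot 13 = 65 < 70$ vertices. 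The base genuinely requires tolerating pentagon-color triangles while excluding pentagon-color fans, which is exactly what the placement of the three $G_3^3$ copies accomplishes.

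There is a second, smaller but genuine, error: your $k=2$ construction provably does not exist, so the promised ``short search'' would come up empty. For a circulant coloring on $\mathbb{Z}_{13}$ with difference classes $S \subseteq \{1,\dots,6\}$ and complement $S^c$, multiplication by $2$ permutes the six $\pm$-classes in a single $6$-cycle, so up to this symmetry there are only four types of balanced choices $|S|=3$, and in every complementary pair at least one color contains three independent edges inside the neighborhood of $0$: for $S=\{1,2,4\}$ take $\{1,12\},\{2,4\},\{9,11\}$; for $S=\{3,4,6\}$ take $\{3,9\},\{4,7\},\{6,10\}$; for $S=\{1,3,4\}$ the neighborhood of $0$ induces a $6$-cycle. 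Unbalanced splits fail as well, since a color class of degree at least $8$ gives every vertex of the neighborhood of $0$ at least three neighbors inside it, forcing three independent edges. Since $13$ is prime, every vertex-transitive graph on $13$ vertices is circulant, so no vertex-transitive example exists at all; the actual extremal graph (contract the color-$i$ edge of $G_3^i$) is $6$-regular in both colors but not vertex-transitive. The clean fix is the paper's: quote $R(F_3,F_3)=14$ and use $GR_2(F_3)=R(F_3,F_3)$.
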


For the Gallai-Ramsey number of kipases, Zou, Mao, Magnant, Wang, and Ye~\cite{Zou2019} provided the exact value of $GR_k(\widehat{K}_3)$. Zhao and Wei~\cite{Zhao2021} determined the exact value of $GR_k(\widehat{K}_4)$. For general kipases, they further proposed the following conjecture.
\begin{conjecture}\label{conjkipas}
  For all $k \ge 2$ and $m \ge 2$,
  \begin{equation*}
    GR_k(\widehat{K}_m) =
    \begin{cases}
    (R_2(\widehat{K}_m) - 1) \cdot 5^{\frac{k-2}{2}} + 1, & \text{if } k \text{ is even and } m \text{ is odd},\\
    R_2(\widehat{K}_m) + \frac{m}{2} \cdot (5^{\frac{k}{2}} - 5), & \text{if } k \text{ is even and } m \text{ is even},\\
    \max\{2(R_2(\widehat{K}_m) - 1), 5m\} \cdot 5^{\frac{k-3}{2}} + 1, & \text{if } k \text{ is odd}.
    \end{cases}
  \end{equation*}	
\end{conjecture}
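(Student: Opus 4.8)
The plan is to establish the conjectured equalities by proving a matching lower and upper bound in each of the three parity regimes. The lower bounds will come from explicit Gallai colorings of $K_{N-1}$ with no monochromatic $\widehat{K}_m$, built by iterating a ``factor-$5$'' gadget on top of an extremal $2$-coloring of $K_{R_2(\widehat{K}_m)-1}$; the upper bounds will come from induction on $k$, with Theorem~\ref{Gallai} applied at each step to expose a Gallai partition whose reduced graph carries only two colors.

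For the lower bound I would first isolate the single gadget that drives every case. Given any Gallai coloring of $K_n$ in colors $\{3,\dots,k\}$ with no monochromatic $\widehat{K}_m$, form $K_{5n}$ from five disjoint copies and color the edges between copies by the unique $2$-coloring of $K_5$ into two $5$-cycles, using two fresh colors $1,2$. This is still Gallai, and each new color class is a blow-up $C_5[\overline{K_n}]$; since the apex of $\widehat{K}_m$ would need its monochromatic neighborhood to span an edge of $P_m$, whereas in $C_5[\overline{K_n}]$ every closed neighborhood induces an independent set, no monochromatic $\widehat{K}_m$ is created. Iterating this gadget yields the constructions. For even $k$ and odd $m$, seed with the extremal $2$-coloring of $K_{R_2(\widehat{K}_m)-1}$ and apply the gadget $(k-2)/2$ times, reaching $(R_2(\widehat{K}_m)-1)\,5^{(k-2)/2}$ vertices. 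For odd $k$ I would compare two seeds: the one-colored $K_m$ iterated $(k-1)/2$ times, giving $5m\cdot 5^{(k-3)/2}$; and the even $(k-1)$-color construction with two of its copies joined by one fresh color, giving $2(R_2(\widehat{K}_m)-1)\,5^{(k-3)/2}$ (valid because a complete bipartite color class again has independent neighborhoods). The larger produces the claimed $\max$. The even-$m$, even-$k$ case needs the extra work: here I would use an \emph{unbalanced} blow-up in which, at each level, one of the five blocks is padded by about $\tfrac{m}{2}$ additional vertices attached so as to just avoid the $\tfrac{m}{2}$ independent edges an even $P_m$ requires, producing the additive term $\tfrac{m}{2}(5^{k/2}-5)$ in place of a pure power of $5$.

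For the upper bound I would induct on $k$, taking the exact $2$-color values $R_2(\widehat{K}_m)$ (and, where needed, the known $3$-color values) as the base. Given a Gallai $k$-coloring of $K_N$ on the conjectured number of vertices, apply Theorem~\ref{Gallai} to obtain parts $V_1,\dots,V_p$ whose reduced graph uses only two colors, say $1$ and $2$. If $p=2$ with all cross edges in color $1$, then a color-$1$ path $P_m$ inside either part, together with any vertex of the other part as apex, yields a monochromatic $\widehat{K}_m$; hence the color-$1$ subgraph inside each part is $P_m$-free, which bounds the part sizes and lets the induction hypothesis close the count. If $p\ge 3$, I would locate in the two-colored reduced graph a monochromatic configuration (a color-$c$ triangle, or a vertex of large color-$c$ degree) and lift it: placing the apex in a well-chosen part and threading $P_m$ through the color-$c$ neighboring parts gives $\widehat{K}_m$ unless the relevant part sizes are small, and the worst case over where the apex sits is exactly what pins the threshold.

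The main obstacle is making the upper-bound count tight enough to hit the exact values rather than order-of-magnitude bounds, and this is sharpest in the two irregular regimes. In the even-$m$, even-$k$ case the target is additive, so the induction must track the additive term $\tfrac{m}{2}(5^{k/2}-5)$ across the Gallai partition and show it cannot be exceeded; the difficulty is that this surplus can be spread over several parts, and ruling out every distribution requires a careful optimization of where the apex and the two endpoints of $P_m$ are placed. In the odd-$k$ case the $\max$ means the extremal configuration switches between the ``doubled'' and the ``$5m$'' construction according to the size of $R_2(\widehat{K}_m)$, so the upper-bound argument must branch and match whichever bound is active. Verifying the base cases in terms of $R_2(\widehat{K}_m)$, and confirming that the apex/path interaction never lets a monochromatic kipas slip across parts, is where essentially all the technical effort will concentrate.
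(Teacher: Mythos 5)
You are attempting to prove a statement that is false, and indeed the entire point of this paper is to \emph{disprove} Conjecture~\ref{conjkipas}: Theorem~\ref{thmkipas} (for even $m\ge 6$) and Theorem~\ref{thmoddm} (for odd $m\ge 7$) give lower bounds that strictly exceed the conjectured values whenever $k\ge 4$. Consequently, no amount of care in your upper-bound induction can succeed; the ``main obstacle'' you identify (making the count tight) is not a technical difficulty but an impossibility. The concrete idea you missed is visible in your own gadget analysis. You argue that in the blow-up $C_5[\overline{K_n}]$ every monochromatic neighborhood is independent, and you therefore keep the five blocks entirely free of the two cross colors. But avoiding a monochromatic $\widehat{K}_m$ does not require this: by Lemma~\ref{basiclemma}, if three consecutive blocks are red-free while the other two blocks merely have their internal red subgraphs of maximum degree at most $m/2-1$ with red components on at most $m-1$ vertices, there is still no red $\widehat{K}_m$ (an apex in a red-free block would need its path $P_m$ inside a single small red component, and an apex in one of the other two blocks would need at least $m/2$ red neighbors in its own block, since alternate path vertices cannot sit in the red-free opposite blocks). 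This slack lets one re-use the two cross colors \emph{inside} blocks via the graphs $F(p,q)$ on $m-2$ or $m-1$ vertices and iterate (the graphs $G_{2k-2}(\ell)$ and $G_{2k}$ of Section~\ref{seckipas}), packing strictly more vertices at every level than your pure five-fold blow-up.

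Quantitatively, for even $k$ and $m\equiv 2\pmod 4$ this yields
\[
GR_k(\widehat{K}_m)\ \ge\ R_2(\widehat{K}_m)+5(m-1)\bigl(5^{\frac{k-2}{2}}-1\bigr)-(m-2)(k-2),
\]
which for $m=6$ exceeds the conjectured value $R_2(\widehat{K}_6)+3(5^{k/2}-5)$ by $2\cdot 5^{k/2}-4k-2>0$ for all even $k\ge 4$; analogous strict improvements hold for $m\equiv 0\pmod 4$, for odd $k\ge 5$, and (via Theorem~\ref{thmoddm}) for odd $m\ge 7$, so even your even-$k$, odd-$m$ case $(R_2(\widehat{K}_m)-1)5^{(k-2)/2}+1$ is not the truth. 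Your lower-bound constructions are correct as far as they go --- they reproduce exactly the colorings behind the conjectured values, and your vague ``padding by about $m/2$ vertices'' in the even-$m$ case gestures at the right phenomenon without pushing it far enough --- but the matching upper bound you plan to prove by induction on Gallai partitions does not exist. Where your $p=2$ and $p\ge 3$ analysis would break down is precisely in assuming that the parts adjacent to the apex's part in the relevant color must be essentially free of that color; the extremal colorings above violate that assumption while still avoiding every monochromatic kipas.
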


For the Gallai-Ramsey number of wheels, Song, Wei, Zhang, and Zhao~\cite{Song2020} and Mao, Wang, Magnant, and Schiermeyer~\cite{Mao2023} independently provided the exact value of $GR_k(W_4)$. For general odd-order wheels, the former paper proposed the following conjecture.
\begin{conjecture}\label{conjwheel}
  For all $k \ge 2$ and even $m \ge 4$,
    \begin{equation*}
      GR_k(W_m) = 
      \begin{cases} 
      (R_2(W_m)-1)\cdot 5^{\frac{k-2}{2}}+1, & \text{if } k \text{ is even};\\
      2(R_2(W_m)-1)\cdot 5^{\frac{k-3}{2}}+1, & \text{if } k \text{ is odd}.
      \end{cases}
    \end{equation*}
\end{conjecture}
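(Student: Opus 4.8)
Since the goal is to \emph{refute} Conjecture~\ref{conjwheel}, the plan is constructive: for a suitable parity of $k$ and for $m$ outside the already-settled cases (so $m \ge 6$ even, the value $GR_k(W_4)$ being known), I would exhibit a Gallai $k$-coloring of $K_t$ with no monochromatic $W_m$ and with $t$ at least the conjectured value of $GR_k(W_m)$. Any such coloring forces $GR_k(W_m) \ge t+1$, so taking $t$ equal to the conjectured value already produces a strict inequality and kills the claimed equality. Note that the case $k=2$ is automatically consistent, since there the formula reads $R_2(W_m)$ and $GR_2 = R_2$ because two colors admit no rainbow triangle; hence the refutation must live in the genuinely multicolor regime $k \ge 3$.

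The mechanism I would exploit is that the conjectured extremal coloring is the iterated substitution in which each pair of colors contributes the factor $5 = R(3,3)-1$, realized by blowing up the triangle-free two-coloring of $K_5$ (the pentagon and its complement). That factor is forced \emph{only} when the substituted blocks may be arbitrarily large, for then a single monochromatic triangle in the reduced graph blows up to a complete multipartite color class that already contains $W_m$. The crucial slack is that avoiding a monochromatic $W_m$ is strictly weaker than avoiding a monochromatic $K_3$: if a reduced color class has its parts of size $s$ with $s<m/2$, then the blow-up of an edge is a $K_{s,s}$ containing no $C_m$, hence no rim of any wheel. Thus, at the levels where the current block size is still small compared with $m$, one may replace the pentagon by a denser two-colored reduced graph on more than five parts, provided every \emph{monochromatic neighbourhood}, after the blow-up, remains $C_m$-free; this pushes the local expansion above $5$ and harvests a surplus of vertices over the pentagon prediction.

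Concretely, I would first pin down the critical block size near $m/2$ and solve the associated extremal problem: maximize the number of parts $t$ in a two-coloring whose monochromatic neighbourhood blow-ups avoid $C_m$, so that $t\cdot(\text{block size})$ exceeds what the pentagon would give at the corresponding number of colors. Once such a surplus gadget on a fixed (small) number of colors is in hand, I would propagate it by ordinary pentagon blow-ups for the remaining colors; because each pentagon level multiplies the vertex count by $5$, a surplus of $\Delta$ vertices at a low level becomes a surplus of $\Delta\cdot 5^{(\text{remaining pairs})}$ at the top, which is exactly what is needed to overshoot $(R_2(W_m)-1)\cdot 5^{(k-2)/2}$.

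Two checks accompany the construction. That the coloring is Gallai follows from Theorem~\ref{Gallai}: a substitution of Gallai colorings with a two-colored reduced graph produces no rainbow triangle, since any triangle lies inside one block or projects onto an edge or a two-colored triangle of the reduced graph. The absence of a monochromatic $W_m$ is where the real difficulty lies, and I expect it to be the main obstacle. A copy in a block color is confined to a single block and dies by induction, but a copy in a reduced color could place its hub in one part and thread its rim $C_m$ through several blocks along between-block edges; ruling this out means proving that no monochromatic neighbourhood in the blow-up contains a $C_m$, a circumference-of-blow-up estimate that must be made tight because the entire improvement depends on setting the part sizes just below the threshold at which a rim cycle first appears. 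Balancing this threshold against the demand for as many parts as possible—effectively an extremal bound intertwined with the precise value of $R_2(W_m)$—is the delicate core of the disproof.
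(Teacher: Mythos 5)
Your opening strategy (a lower-bound construction exceeding the conjectured value) and your diagnosis of the slack (avoiding a monochromatic $W_m$ is much weaker than avoiding a monochromatic triangle) match the paper. The genuine gap is your central mechanism: keeping the layered substitution scheme in which each pair of colors appears \emph{only between} the blocks of its level, and trying to gain by using reduced graphs on \emph{more than five} parts when blocks have size $s<m/2$. Within that scheme one can prove you can never exceed the conjectured value, so the extremal problem you defer to (``maximize the number of parts'') has an answer that yields no disproof. Indeed, if $s\ge m/2$, a monochromatic triangle in the reduced graph gives a monochromatic $W_m$ (hub in one part, rim $C_m$ inside the $K_{s,s}$ between the other two), so both reduced color classes are triangle-free and $t\le R(3,3)-1=5$. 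If $2\le s<m/2$, monochromatic reduced triangles are allowed, but every monochromatic reduced neighbourhood must blow up to a $C_m$-free graph; its cyclic components then have circumference less than $m/s$, so by the Erd\H{o}s--Gallai theorem each neighbourhood spans at most $\frac{m}{2s}\,d$ edges, whence the number of monochromatic triangles is at most $\frac{m}{3s}\binom{t}{2}$, while Goodman's bound forces at least $t(t-1)(t-5)/24$ of them; this gives $t\le 4m/s+5$, hence $s\cdot t\le 4m+5s<6.5m$, strictly below $5\bigl(R_2(W_m)-1\bigr)\ge 10m$ (recall $R_2(W_m)\ge 2m+1$). Concretely, for $m=6$, $k=4$, where Conjecture~\ref{conjwheel} claims $GR_4(W_6)=18\cdot 5+1=91$: blocks of size $\ge 3$ allow only five parts (total $\le 5\cdot 18=90$), and blocks of size $2$ force star-forest neighbourhoods in both reduced colors, hence both classes $K_4$-free, so $t\le R(4,4)-1=17$ and the total is at most $34$. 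Front-loading several such levels only compounds the loss, since every later level multiplies by at most $5$. So within your framework the conjectured construction is optimal---which is exactly why the conjecture was plausible---and the disproof must leave that framework.

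That departure is the paper's key idea, which your proposal is missing. The paper keeps exactly five parts at every level but breaks your dichotomy between ``block colors'' and ``reduced colors'': the current pair of colors is allowed to appear \emph{inside} the four non-central blocks, provided that within each block the edges of that color have maximum degree at most $m/2-1$ and connected components on at most $m-1$ vertices (Lemma~\ref{basiclemma}). A monochromatic $\widehat{K}_m$ or $W_m$ in that color centered in the expanded central part would need its rim inside a single such component, which is too small; one centered inside a block fails because the rim meets the two pentagon-neighbouring parts in an independent set of the rim (at most $m/2$ vertices), so at least $m/2$ rim vertices would have to be internal neighbours of the center, contradicting the degree bound. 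This is what lets the four non-central blocks be the graphs $G_{2k-2}(\ell)$, assembled from $F(p,q)$ and $H_{2k-4}[F(p,q)]$, which are strictly larger than the recursive extremal graph $G_{2k-2}$: for $m=6$, $k=4$ they have $23$ vertices against $18$, giving $|G_4|=18+4\cdot 23=110$ and hence $GR_4(W_6)\ge 111>91$. In short, the gain comes from enlarging the blocks, not from multiplying the parts; without this idea your plan cannot produce a counterexample.
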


We negate both Conjectures~\ref{conjkipas} and~\ref{conjwheel} with a unified result.

\begin{theorem}\label{thmkipas}
  For all $k \ge 2$ and even $m \ge 6$, regardless of whether $G$ is $\widehat{K}_m$ or $W_m$, the following inequalities hold.
    \begin{equation*}
      GR_k(G) \ge  
      \begin{cases}
      \max\{2R_2(G)-1, 5m+1\}, & \text{if } k=3;\\
      R_2(G)+5(m-2)(5^{\frac{k-2}{2}}-1)-(m-4)(k-2), & \text{if } k \text{ is even and } m \equiv 0 \pmod{4};\\
      R_2(G)+5(m-1)(5^{\frac{k-2}{2}}-1)-(m-2)(k-2), & \text{if } k \text{ is even and } m \equiv 2 \pmod{4};\\
      2(R_2(G)+5(m-2)(5^{\frac{k-3}{2}}-1)-(m-4)(k-3))-1, & \text{if odd } k \ge 5 \text{ and } m \equiv 0 \pmod{4};\\
      2(R_2(G)+5(m-1)(5^{\frac{k-3}{2}}-1)-(m-2)(k-3))-1, & \text{if odd } k \ge 5 \text{ and } m \equiv 2 \pmod{4}.
      \end{cases}
    \end{equation*}
\end{theorem}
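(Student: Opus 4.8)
The plan is to prove all five bounds uniformly by exhibiting, for each $k$ and each $G \in \{\widehat{K}_m, W_m\}$, an explicit Gallai $k$-coloring of the complete graph on $N-1$ vertices (where $N$ is the claimed right-hand side) with no monochromatic copy of $G$. The first simplification I would make is to observe that $\widehat{K}_m = K_1 + P_m$ is a subgraph of $W_m = K_1 + C_m$, since $P_m \subseteq C_m$. Hence any coloring avoiding a monochromatic kipas automatically avoids a monochromatic wheel, so it suffices throughout to forbid monochromatic $\widehat{K}_m$; the individual graph $G$ enters only through its two-color Ramsey number $R_2(G)$, which I feed in via an optimal $G$-free two-coloring on $R_2(G)-1$ vertices used as a seed.

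The colorings are assembled by iterated substitution, which is legitimate because Theorem~\ref{Gallai} guarantees that substituting Gallai-colored blocks into the parts of a two-colored complete graph creates no rainbow triangle (any triangle meeting at least two blocks sees at most the two between-block colors on its crossing edges). Two structural facts drive the growth. First, the two complementary $C_5$'s forming a two-coloring of $K_5$: blowing up either color class yields a graph in which every vertex's monochromatic neighborhood is a union of two non-adjacent parts and so induces no edge of that color, whence no monochromatic $P_m$---and a fortiori no $\widehat{K}_m$---can occur in those two colors, regardless of block sizes. This supplies the factor $5$ per two new colors. Second, for odd $k$ I take two disjoint copies of the even-$(k-1)$ construction joined completely in the new color; that color is complete bipartite, so each vertex's monochromatic neighborhood lies on the opposite side and is edge-free in that color, again killing the kipas and producing the ``$2(\cdots)-1$'' shape.

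The new idea that beats the conjectured bounds is to terminate the recursion not with single vertices but with large monochromatic cliques. Internally this is harmless: a color-$c$ clique $K_q$ contains $\widehat{K}_m = K_1 + P_m$ only when $q \ge m+1$, so color-$c$ cliques of order $m$ are safe---this is exactly the $k=3$ gadget realizing $5m+1$, namely five copies of $K_m$ placed in a $C_5$ pattern in the remaining two colors. The subtlety for even $k \ge 4$ is that keeping the total number of colors equal to $k$ forces reuse of a color across different levels of the hierarchy, and a reused color can furnish an external vertex adjacent to an entire terminal clique, serving as a center that completes $\widehat{K}_m$ from a $P_m$ living inside the clique. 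Preventing such external centers caps the terminal cliques just below $m$, and a parity analysis of how even paths and cycles embed in complete bipartite pieces (whether $m/2$ is even or odd, i.e. $m \equiv 0$ or $2 \pmod 4$) pins the cap at order $m-2$ respectively $m-1$, which is precisely what replaces the naive leading coefficient $R_2(G)-1$ by $5(m-2)$ or $5(m-1)$. Accumulating the one-vertex-per-level discrepancy over the $\tfrac{k-2}{2}$ (even) or $\tfrac{k-3}{2}$ (odd) substitution rounds produces the linear correction terms $-(m-4)(k-2)$ versus $-(m-2)(k-2)$ and their doubled odd analogues.

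The main obstacle I anticipate is the verification that no monochromatic $\widehat{K}_m$ survives, which is global rather than local: a monochromatic kipas in color $c$ needs a single vertex together with a color-$c$ path $P_m$ in its color-$c$ neighborhood, and across the nested substitution that neighborhood can gather contributions from several levels at once---terminal-clique edges, blown-up bipartite edges from the $C_5$ layers, and the seed two-coloring, including those coming from reused colors. The crux is therefore an upper bound on the length of the longest monochromatic path contained in any monochromatic neighborhood, maintained as an invariant through the induction and kept at most $m-1$ at every level while the clique orders are pushed as high as this constraint permits. I would organize the argument as a single induction on $k$ that simultaneously records the order of the construction and this worst-case neighborhood-path length, treating the two-new-colors step and the even-to-odd doubling as separate inductive moves, and it is here that the exact coefficients and the parity bookkeeping must be settled.
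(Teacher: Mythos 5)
Your overall architecture coincides with the paper's: reduce $W_m$ to $\widehat{K}_m$, seed with an extremal two-coloring on $R_2(G)-1$ vertices, grow by blowing up the two-colored $K_5$ whose color classes are complementary $C_5$'s (factor $5$ per two new colors), handle odd $k$ by joining two copies of the even-$(k-1)$ graph in the new color, and realize $5m+1$ for $k=3$ by five color-$3$ cliques $K_m$ in a $C_5$ pattern. The gap is in your key new ingredient, the terminal gadgets. You propose monochromatic cliques of order $m-2$ (for $m\equiv 0 \pmod 4$) or $m-1$ (for $m\equiv 2 \pmod 4$) in a reused color $c$, and you guard only against a center \emph{outside} the clique, which is indeed what your cap $\le m-1$ prevents. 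But you miss the center \emph{inside} the clique. Reuse of $c$ means the block containing the color-$c$ clique $K_q$ is joined in color $c$ to a neighboring block $U$ of the $C_5$ pattern (this is exactly the situation your "external center" worry describes). Take a clique vertex $v$: its color-$c$ neighborhood contains the other $q-1$ clique vertices $c_1,\dots,c_{q-1}$ \emph{and} all of $U$, and every clique-to-$U$ edge has color $c$. Hence $N_c(v)$ contains the color-$c$ path $u_1c_1u_2c_2\cdots u_{q-1}c_{q-1}u_q$ with $u_i \in U$, which has $2(q-1)+1$ vertices. As soon as $q \ge m/2+1$ this is a color-$c$ $P_m$ inside $N_c(v)$, i.e.\ a color-$c$ $\widehat{K}_m$. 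So monochromatic terminal cliques in a reused color are capped at order $m/2$, not $m-1$; with that cap your construction cannot produce the coefficients $5(m-2)$ and $5(m-1)$, so the proposal as written does not beat the conjectured bounds.

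This is precisely why the paper's gadgets $F(p,q)$ on $m-2$, respectively $m-1$, vertices are \emph{two-colored} rather than monochromatic: two cliques $K_{m/2-1}$ of color $p$ joined completely in color $q$ when $m \equiv 0 \pmod 4$, and an $(m/2-1)$-regular two-coloring of $K_{m-1}$ when $m \equiv 2 \pmod 4$ (the parity of $m/2-1$ governs the \emph{existence} of such a regular graph, which is the real source of the $m-2$ versus $m-1$ dichotomy, not an embedding property of even paths in bipartite pieces). The invariant that makes the recursion compositional is not your neighborhood-path-length bound, which is just a restatement of kipas-freeness and does not propagate through substitution on its own, but the pair of conditions in Lemma~\ref{basiclemma}: in every reused color, maximum degree at most $m/2-1$ and connected components of order at most $m-1$ inside the relevant blocks. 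The zigzag count above, $2(m/2-1)+1 = m-1 < m$, is exactly how that lemma converts these two conditions into the absence of a monochromatic $\widehat{K}_m$. Your plan becomes correct (and becomes the paper's proof) once the monochromatic cliques are replaced by such degree-bounded two-colored gadgets and the degree/component invariant is the one carried through the induction.
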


Readers may note that when $k=2$ or $k=3$, the current lower bound coincides with the original lower bound. However, when $k \ge 4$, as the number of colors increases, the flexibility of the coloring schemes also expands. The new lower bound significantly surpasses the original lower bound.

Next, we calculate the difference between the new lower bound and the original lower bound for $GR_k(\widehat{K}_6)$. When $k$ is an even number of at least $4$, this difference is given by
\[
2\cdot 5^{\frac{k}{2}}-4k-2.
\]
When $k$ is an odd number of at least $5$, this difference is
\[
\min\{(52-2R_2(\widehat{K}_6))5^{\frac{k-3}{2}}+2R_2(\widehat{K}_6), 2R_2(\widehat{K}_6)+4\cdot 5^{\frac{k-1}{2}}\}-8k-28.
\]
Recall that $R_2(\widehat{K}_6) \le R_2(W_6) = 19$~\cite{Lidicky2021}. It is evident that both of the above differences are always positive. Furthermore, as $k$ increases, these differences grow rapidly.

We also calculate the difference between the new lower bound and the original lower bound for $GR_k(W_6)$. Once again, we need that fact that $R_2(W_6) = 19$. When $k$ is an even number of at least $4$, this difference is
\[
7\cdot 5^{\frac{k}{2}}-4k+1.
\]
When $k$ is an odd number of at least $5$, this difference is
\[
14\cdot 5^{\frac{k-3}{2}}-8k+10.
\]
Again, it is evident that both of these differences are always positive, and they increase rapidly with larger $k$.

When $m$ is an even number of at least $8$, although we have not yet determined the exact values of $R_2(\widehat{K}_m)$ and $R_2(W_m)$, we proved in another paper~\cite{Zhang2024} that
\[
R_2(\widehat{K}_m) \le 4m-2\ \text{and}\ R_2(W_m) \le 4m+\min\left\{m/2, 664\right\}.
\]
By comparing the coefficients of the exponential terms in the new and original bounds, we conclude that the new bounds are evidently superior to the old ones for large $m$.

Notably, in the case of kipas $\widehat{K}_m$, $m$ can be either even or odd. When $m$ is odd, we can make the following improvements.

\begin{theorem}\label{thmoddm}
  For all $k \ge 2$ and odd $m \ge 7$, the following inequalities hold:
  \begin{equation*}
    GR_k(\widehat{K}_m) \ge  
    \begin{cases}
    \max\{2R_2(\widehat{K}_m)-1,5m+1\}, & \text{if } k=3;\\
    R_2(\widehat{K}_m)+5(m-3)(5^{\frac{k-2}{2}}-1)-(m-5)(k-2), & \text{if } k \text{ is even and } m \equiv 1 \pmod{4};\\
    R_2(\widehat{K}_m)+5(m-2)(5^{\frac{k-2}{2}}-1)-(m-3)(k-2), & \text{if } k \text{ is even and } m \equiv 3 \pmod{4};\\
    2(R_2(\widehat{K}_m)+5(m-3)(5^{\frac{k-3}{2}}-1)-(m-5)(k-3))-1, & \text{if odd } k \ge 5 \text{ and } m \equiv 1 \pmod{4};\\
    2(R_2(\widehat{K}_m)+5(m-2)(5^{\frac{k-3}{2}}-1)-(m-3)(k-3))-1, & \text{if odd } k \ge 5 \text{ and } m \equiv 3 \pmod{4}.
    \end{cases}
  \end{equation*}
\end{theorem}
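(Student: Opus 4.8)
The statement is a list of lower bounds, so the plan is, in each case, to exhibit an explicit Gallai $k$-colouring of $K_{N-1}$ (with $N$ the claimed value) having no monochromatic $\widehat{K}_m$. I would first record the reformulation that drives everything: since $\widehat{K}_m=K_1+P_m$, a colouring contains a monochromatic $\widehat{K}_m$ in colour $c$ if and only if some vertex $v$ has a colour-$c$ path on $m$ vertices inside its colour-$c$ neighbourhood $N_c(v)$. Thus it suffices to build a Gallai colouring in which, for every colour $c$ and every vertex $v$, the colour-$c$ graph induced on $N_c(v)$ is $P_m$-free. The Gallai (no-rainbow-triangle) condition I would enforce structurally through Theorem~\ref{Gallai}: every colouring I construct is obtained by substituting previously built Gallai colourings into the vertices of a complete graph whose reduced colouring uses only two colours, so no rainbow triangle is ever introduced.

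The odd-$k$ cases fold into the even ones. For odd $k\ge5$ the target has the form $2X-1$, where $X$ is the even bound for $k-1$ colours, so I would take two disjoint copies of the extremal even-$(k-1)$ construction and colour every edge between them with a single new colour $k$. Then every triangle meeting a colour-$k$ edge meets two of them (hence is not rainbow), and for each vertex $v$ the colour-$k$ neighbourhood is the opposite copy, inside which colour $k$ is absent; so $N_k(v)$ carries no colour-$k$ edge, let alone a colour-$k$ $P_m$. This gives $N(k)=2N(k-1)-1$. The base $k=3$ combines this doubling (yielding $2R_2(\widehat{K}_m)-1$) with a direct construction giving $5m+1$: five colour-$1$ cliques $K_m$ placed at the parts of a two-colour pentagon pattern (the remaining colours on the reduced $C_5$ and its complementary pentagram). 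Each clique has exactly $m$ vertices, so every colour-$1$ neighbourhood lies in one clique and has at most $m-1$ vertices, too few for a colour-$1$ $P_m$; and each pentagon colour has empty neighbourhoods.

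The heart of the argument is the even-$k$ construction, whose purpose is to exceed the ``clean'' value $(R_2(\widehat{K}_m)-1)5^{(k-2)/2}+1$ of Conjecture~\ref{conjkipas}---exactly the value produced by a fully triangle-free (iterated pentagon) blow-up, in which every newly introduced colour has empty neighbourhoods and is therefore wasted. To do better I would let the two colours that carry the long monochromatic paths be \emph{reused} down the levels of the substitution: the internal colour of the cliques placed at some parts is made to coincide with a colour appearing on the reduced pattern, so that a monochromatic path may thread through several consecutive blocks rather than being trapped inside one. The orders of these cliques are then tuned to $m \bmod 4$ so that the longest monochromatic path that can be threaded through any single neighbourhood is exactly $m-1$. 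This is precisely where the parity of $m$ helps and where odd $m$ improves on the even-$m$ analysis of Theorem~\ref{thmkipas}: a threaded path alternates between clique segments and connecting edges, so its maximal length before a $P_m$ is forced is a parity-dependent function of the block sizes, and for odd $m$ one may enlarge the critical blocks by one vertex. This is the source of the coefficients $5(m-3)$ and $5(m-2)$ and of the per-level deficits $m-5$ and $m-3$ in the two residue classes $m\equiv1$ and $m\equiv3 \pmod 4$.

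I expect the main obstacle to be the exact path-length bookkeeping. First, one must prove a sharp lemma bounding, for the chosen block sizes, the longest monochromatic path in every colour-$c$ neighbourhood across all (at most $k$) colours and all levels of the substitution, and show the choice is best possible, since it is the sharpness---not merely the order of magnitude---that pins down the residue-dependent coefficients. Second, one must assemble the recursion into the closed forms in the statement: each level contributes a factor close to $5$ together with a clique of size a fixed function of $m$, and summing over the $\tfrac{k-2}{2}$ levels should yield the geometric term $5(m-3)(5^{(k-2)/2}-1)$ (respectively $5(m-2)(5^{(k-2)/2}-1)$) minus the accumulated deficit $(m-5)(k-2)$ (respectively $(m-3)(k-2)$); verifying that the deficit is exactly this, and that the Gallai property survives every substitution, is the delicate part. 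The final comparison showing that these bounds beat the conjectured values for all admissible $m,k$ (using $R_2(\widehat{K}_m)\le 4m-2$) is then routine.
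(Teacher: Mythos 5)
Your handling of the peripheral cases matches the paper: the odd-$k$ bounds by doubling the $(k-1)$-colour extremal graph with one new colour between the copies, and $GR_3(\widehat{K}_m)\ge 5m+1$ by placing five monochromatic cliques $K_m$ on a two-coloured pentagon. But for the heart of the theorem --- the even-$k$ lower bound for odd $m$ --- your proposal is a program, not a proof. You never specify the blocks (note that the even-$m$ blocks of the paper, built from cliques of order $m/2-1$ or from $(m/2-1)$-regular two-colourings, do not even parse when $m$ is odd), you never state or prove the ``sharp path-length lemma'' you say is needed, and you never derive the claimed coefficients; you yourself flag these as ``the main obstacle'' and ``the delicate part.'' Since the entire content of the theorem lives in exactly those steps, this is a genuine gap.

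What you missed is that no new construction or bookkeeping is needed at all. Since $m\ge 7$ is odd, $m-1$ is even with $m-1\ge 6$, and $\widehat{K}_{m-1}\subseteq\widehat{K}_m$, so any colouring with no monochromatic $\widehat{K}_{m-1}$ a fortiori has no monochromatic $\widehat{K}_m$. The paper therefore takes the extremal graph of Theorem~\ref{thmkipas} built for the \emph{even} parameter $m-1$ and makes one change: the innermost core, which there is the two-colour extremal graph on $R_2(\widehat{K}_{m-1})-1$ vertices (two such cores when $k$ is odd), is replaced by the extremal graph on $R_2(\widehat{K}_m)-1$ vertices. The core carries only colours $1$ and $2$, so Lemma~\ref{basiclemma} and Claims~\ref{basicproperty}--\ref{finalclaim}, applied with parameter $m-1$, go through verbatim for everything outside the core, while the core itself avoids a monochromatic $\widehat{K}_m$ by the definition of $R_2(\widehat{K}_m)$. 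Substituting $m-1$ into the even-$m$ formulas gives $5\bigl((m-1)-2\bigr)=5(m-3)$ and $(m-1)-4=m-5$ when $m\equiv 1\pmod 4$, respectively $5(m-2)$ and $m-3$ when $m\equiv 3\pmod 4$, and the core swap upgrades $R_2(\widehat{K}_{m-1})$ to $R_2(\widehat{K}_m)$. So the residue-dependent coefficients are not, as you suggest, the output of a new parity-sensitive path analysis special to odd $m$; they are simply the even-$m$ coefficients evaluated at $m-1$.
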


Now we turn to the final conjecture. First, we need to clarify some concepts. Together with Broersma, we introduced the concept of Ramsey-full in \cite{Zhang2016}. Let $R(G_1,G_2)=N$. If there exists a red-blue edge-coloring of the graph $K_N-e$ such that it contains neither a red copy of $G_1$ nor a blue copy of $G_2$, then the pair $(G_1,G_2)$ is called \emph{Ramsey-full}. When $G_1=G_2$, we say that the graph $G_1$ is Ramsey-full.

Su and Liu~\cite{Su2022} proposed a corresponding concept within Gallai-Ramsey theory. Let $GR(G_1, \ldots, G_k)=N$. If there exists a Gallai $k$-edge-coloring of $K_N-e$ using colors from $[k]$ such that for any $i \in [k]$, there is no $G_i$ in color $i$, then the set of graphs $(G_1, \ldots, G_k)$ is said to be \emph{Gallai-Ramsey-full}. When $G_1=\cdots=G_k$, the graph $G_1$ is referred to as Gallai-Ramsey-full.

Su and Liu~\cite{Su2022} noted that complete graphs and cycles of length four have the property that they are both Ramsey-full and Gallai-Ramsey-full. Based on this observation, they proposed the following conjecture.

\begin{conjecture}[Su and Liu, 2022]\label{Suconjecture}
  Let $H$ be a graph with no isolated vertex. Then $H$ is Ramsey-full if and only if $H$ is Gallai-Ramsey-full.
\end{conjecture}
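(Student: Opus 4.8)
The abstract already announces that this equivalence is false, so my plan is to \emph{disprove} it rather than prove it, and specifically to refute the forward implication ``Ramsey-full $\Rightarrow$ Gallai-Ramsey-full.'' The reverse implication is essentially free: for $k=2$ a Gallai coloring is an arbitrary $2$-coloring, since a rainbow triangle requires three colors, so $GR_2(H)=R_2(H)$ and the Gallai-Ramsey-full property (read as holding for every $k\ge 2$) specializes at $k=2$ to ordinary Ramsey-fullness. Hence the only way the equivalence can break is a graph $H$ that is Ramsey-full yet loses the property once three or more colors become available. So the goal is to produce a class of $H$ that is Ramsey-full but for which there is some $k\ge 3$ with $N:=GR_k(H)$ satisfying the stronger property that $K_N-e$ still Gallai-arrows $H$ for \emph{every} deleted edge $e$.

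I would first fix a candidate family for which the exact value $N=GR_k(H)$ is known and, crucially, for which the extremal Gallai colorings of $K_{N-1}$ admit a rigid (uniqueness or near-uniqueness) description; this is the feature that will let the later counting argument close. The Ramsey-full half is then the routine direction: I would exhibit explicitly a red-blue coloring of $K_{R_2(H)}-e$ with no monochromatic $H$, typically by taking the critical $2$-coloring of $K_{R_2(H)-1}$ and adjoining one new vertex whose incident edges (all but the deleted $e$) are colored so as to respect the extremal block structure. The conceptual point driving the whole construction is that in this plain two-colored setting one has complete freedom to color the new vertex's edges, whereas in a genuine $k\ge 3$ Gallai coloring the no-rainbow-triangle constraint of Theorem~\ref{Gallai} removes exactly that freedom.

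The substance is showing $H$ is \emph{not} Gallai-Ramsey-full. I would argue by contradiction: assume a Gallai $k$-coloring of $K_N-e$ with no monochromatic $H$, apply Theorem~\ref{Gallai} to obtain a partition $V_1,\dots,V_p$ whose reduced complete graph uses at most two colors, and induct on $k$ through this partition. The reduced two-colored level behaves like an ordinary two-color Ramsey problem; by the inductive hypothesis and the exact count $|V(K_N-e)|=N-1$ its parts are forced to be near-extremal Gallai-colored blocks, and the rigidity established above pins these blocks down up to the location of the single missing edge. One then tracks where $e$ can sit and shows that the one unit of slack it creates is never enough to destroy the monochromatic $H$ that the size $N-1$ otherwise guarantees — precisely because the rainbow-triangle restriction forces colors to propagate between parts in a constrained way that was absent when $H$ was shown Ramsey-full.

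The main obstacle is the universal quantifier buried in ``not Gallai-Ramsey-full'': one must rule out \emph{every} Gallai coloring of $K_N-e$ for \emph{every} edge $e$, rather than exhibit one bad coloring. Gallai's structure theorem tames this, but the induction still demands (i) sharp, ideally unique, knowledge of the extremal Gallai colorings of $K_{N-1}$ so that the near-extremal blocks are determined, and (ii) a careful case split according to whether $e$ lies inside a part $V_i$, between two parts, or at the two-colored reduced level, verifying in each location that a monochromatic $H$ survives. Proving the requisite rigidity of the extremal colorings, and the key lemma that a single deleted edge cannot provide enough slack at any location, is where essentially all the difficulty will concentrate.
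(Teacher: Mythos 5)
Your reading of the problem is correct: the ``statement'' is a conjecture that the paper refutes, the backward implication is indeed free (with two colors there is no rainbow triangle, so $GR_2(H)=R_2(H)$ and Gallai-Ramsey-fullness specializes at $k=2$ to Ramsey-fullness), and a counterexample must be a Ramsey-full $H$ together with some $k\ge 3$ for which every Gallai $k$-coloring of $K_{GR_k(H)}-e$ contains a monochromatic $H$. The genuine gap is that you never produce such an $H$. Every load-bearing ingredient of your plan --- a family with exactly known $GR_k$, rigid extremal Gallai colorings, the lemma that one deleted edge gives too little slack --- is phrased as a property of a family still to be chosen, and you say yourself that all the difficulty concentrates exactly there. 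A disproof of a biconditional must exhibit the witness, so as written nothing has been established. (A smaller point: quantifying over ``every deleted edge $e$'' is vacuous, since all graphs $K_N-e$ are isomorphic; the only real universal quantifier is over Gallai colorings.)

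For comparison, the paper's witnesses (Theorem~\ref{Ramseyfull}) are $K_{1,n}$ and $K^+_{1,n}$ for even $n\ge 12$, and they calibrate how much machinery is actually needed. For the star, no rigidity argument is needed at all: Erd\H{o}s and Faudree had observed that $K_{1,n}$ is Ramsey-full for even $n$, Su and Liu themselves had proved the star-critical value $GR^*_k(K_{1,n})=2n-2$, and since $GR_k(K_{1,n})=5n/2-3$ for $k\ge 3$, a vertex joined to $GR_k(K_{1,n})-2=5n/2-5\ge 2n-2$ vertices of the critical complete graph already forces a monochromatic star in any Gallai coloring; thus $K_{1,n}$ fails to be Gallai-Ramsey-full by a mere juxtaposition of known results. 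For the subdivided star the paper does carry out essentially your program (Lemma~\ref{main5}): it computes $GR_k(K^+_{1,n})=5n/2+k-6$, proves the uniqueness claim that any Gallai $k$-coloring of the complete graph on $5n/2+k-7$ vertices avoiding a monochromatic $K^+_{1,n}$ coincides, after renaming colors, with one explicit graph $H$ (a two-colored blow-up of $C_5$ plus $k-3$ vertices in private colors), and then shows that an added vertex of degree $2n+k-5$ forces a monochromatic $K^+_{1,n}$ --- that is, it proves the much stronger star-critical statement $GR^*_k(K^+_{1,n})=2n+k-5$ rather than your one-edge-of-slack statement. Note also that for both of these families the extremal colorings grow linearly in $k$ (one new vertex per new color), not by blow-up factors of five, so your proposed ``induction on $k$ through near-extremal blocks'' would have to be replaced by the paper's device of stripping single vertices $v_1,\ldots,v_t$ off the Gallai partition; and Ramsey-fullness of $K^+_{1,n}$ (Lemma~\ref{K1n+}) is not a perturbation of a critical coloring but a short direct proof that $K_{2n-1}\to(K^+_{1,n},K^+_{1,n})$, with the good coloring of $K_{2n-1}-e$ inherited from that of the star.
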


We use $K_{1,n}$ to denote the star on $n+1$ vertices, and $K^+_{1,n}$ to represent the graph obtained by subdividing one edge of $K_{1,n}$. Hence, $K^+_{1,n}$ is a tree with $n+2$ vertices and a maximum degree of $n$. The following theorem disproves the aforementioned conjecture.

\begin{theorem}\label{Ramseyfull}
  For even $n \ge 12$, both $K_{1,n}$ and $K^+_{1,n}$ are Ramsey-full, but neither is Gallai-Ramsey-full.
\end{theorem}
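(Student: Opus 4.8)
First I will record $R(K_{1,n})=R(K^+_{1,n})=2n-1$ for even $n$. The star value is classical: $K_{2n-2}$ is coloured by an $(n-1)$-regular red graph (blue being its complement, of degree $n-2$), while on $2n-1$ vertices avoidance would force an $(n-1)$-regular graph on an odd vertex set, impossible for odd $n-1$. For $K^+_{1,n}$ the same colouring gives the lower bound (maximum monochromatic degree $\le n-1$ already kills $K_{1,n}$), and for the upper bound I will take any $2$-colouring of $K_{2n-1}$ with no monochromatic $K^+_{1,n}$, pick a vertex $c$ of red degree $\ge n$ (one exists by the parity obstruction), and examine its red component: it is either a star $K_{1,s}$ or has exactly $n+1$ vertices. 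In the first case a leaf has blue degree $2n-3\ge n$ with an extendable blue neighbourhood; in the second, any of the $n-2\ge 2$ vertices outside the component has blue degree $\ge n+1$ with an extendable neighbourhood. Either way a blue $K^+_{1,n}$ appears, a contradiction, so $R(K^+_{1,n})=2n-1$. For Ramsey-fullness I will exhibit a single colouring of $K_{2n-1}-e$ serving both graphs: adjoin a vertex joined to the two odd-degree endpoints of the deleted edge so that all degrees become even, take an Euler tour, and colour its edges alternately red and blue. Every monochromatic degree is then $\le n-1$, so this colouring of $K_{2n-1}-e$ contains neither $K_{1,n}$ nor $K^+_{1,n}$.

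\textbf{The Gallai--Ramsey number.} I will work with $k=3$ and claim $GR_3(K_{1,n})=GR_3(K^+_{1,n})=\tfrac{5n-6}{2}$ for even $n$. For the lower bound, blow up $C_5$ into five parts of sizes $\tfrac n2,\tfrac{n-2}2,\tfrac{n-2}2,\tfrac{n-2}2,\tfrac{n-2}2$, colour the reduced $K_5$ so that each of red and blue is a $5$-cycle, and colour every edge inside a part green; each monochromatic degree is then a sum of two part sizes (at most $\tfrac n2+\tfrac{n-2}2=n-1$) or a part size minus one, so this Gallai $3$-colouring of $K_{(5n-8)/2}$ has no monochromatic $K_{1,n}$, hence none of $K^+_{1,n}$. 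For the upper bound I will apply Theorem~\ref{Gallai}: since the colour between two parts is constant, a part of size $\ge n$ would place a monochromatic $K_{1,n}$ (and, after the component dichotomy above, a $K^+_{1,n}$) at a neighbouring vertex; thus every part has size $\le n-1$, the part sizes are feasible weights on a $2$-coloured reduced complete graph with both weighted degrees $\le n-1$, and a finite optimization will show the total weight is at most $\tfrac{5n-8}{2}$, attained only by the $C_5$-structure above. The key feature is that this bound does not grow with $k$: because the targets have bounded degree the two outer colours cannot join large blocks, so the $C_5$ blow-up cannot be iterated.

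\textbf{Failure of Gallai-Ramsey-fullness.} Set $M=\tfrac{5n-8}{2}$, so $GR_3=M+1$, and suppose some Gallai $3$-colouring of $K_{M+1}-e$ (missing edge $uv$) avoided the target. Deleting $u$, respectively $v$, leaves a complete $K_M$ with no monochromatic target, hence an extremal colouring, which by the uniqueness above is forced to be the rigid $C_5$ blow-up with one part of size $\tfrac n2$ and four of size $\tfrac{n-2}2$. Comparing the two structures on the common $K_{M-1}$, I will conclude that the common colouring is a $C_5$ blow-up to which $u$ and $v$ are each adjoined by joining a single part. Now no distribution of these two extra vertices among five parts can keep every part's two same-colour neighbour-parts summing to at most $n-1$ (the total would be $M+1$, infeasible in five parts); hence some part has a monochromatic degree $\ge n$. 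Since the responsible edges are incident to $u$ or $v$ but are never the deleted edge $uv$, they are all present, so the monochromatic $K_{1,n}$ (and therefore $K^+_{1,n}$) survives the deletion. Thus $K_{M+1}-e$ is unavoidable, and neither graph is Gallai-Ramsey-full.

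\textbf{Main obstacle.} The hard part will be the rigidity invoked above: proving that every $M$-vertex extremal Gallai colouring is exactly the $C_5$ blow-up, which reduces to showing the weighted optimization has a unique optimizer—a subset-sum and degree argument ruling out all reduced graphs with $p\ne 5$ and all size vectors other than $\{\tfrac n2,\tfrac{n-2}2,\tfrac{n-2}2,\tfrac{n-2}2,\tfrac{n-2}2\}$. A secondary obstacle, so that the single $k=3$ analysis also covers $K^+_{1,n}$, is to strengthen the upper bound to forbid ``star-type'' colour classes, in which a high-degree monochromatic star is present but cannot be extended; this reuses the component dichotomy from the Ramsey-number computation. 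The hypothesis that $n\ge 12$ is even is exactly what keeps the part sizes integral and the counting margins strict throughout.
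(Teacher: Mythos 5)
Your plan coincides with the paper's proof at its core: everything hinges on a rigidity statement — every extremal Gallai coloring is a $C_5$ blow-up with part sizes $n/2,\,n/2-1,\,n/2-1,\,n/2-1,\,n/2-1$ and free part interiors — followed by an argument that the near-complete graph still forces a monochromatic target. The differences are in scope and mechanism. The paper works with all $k\ge 3$ simultaneously: Lemma~\ref{main5} computes $GR_k(K^+_{1,n})=5n/2+k-6$ and the star-critical value $GR^*_k(K^+_{1,n})=2n+k-5$, and non-fullness follows at every $k$ because $2n+k-5<5n/2+k-7$; for the star it simply cites Su--Liu's $GR^*_k(K_{1,n})=2n-2$ against $GR_k(K_{1,n})=5n/2-3$. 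You instead fix $k=3$ and replace the star-critical computation by a two-deletion argument (delete each endpoint of the missing edge, apply rigidity twice, then count part sizes). This does suffice to refute the conjecture, since Gallai-Ramsey-fullness is a requirement at every $k$, and your size count is sound: at most one part can have $\ge n/2$ vertices when all pairwise sums are at most $n-1$, so five parts total at most $(5n-8)/2$, forcing a bad pair whose ``opposite'' part then carries a monochromatic $K_{1,n}$ untouched by the missing edge. Your Euler-tour coloring of $K_{2n-1}-e$ is a clean self-contained substitute for the paper's citation of Erd\H{o}s--Faudree, and your component dichotomy for $R(K^+_{1,n})=2n-1$ parallels the shorter argument of Lemma~\ref{K1n+} (monochromatic $K_{1,n}$ with leaf set $X$; either a same-colored edge leaves $X$, or the bipartite graph to the other $n-2$ vertices is monochromatic and contains $K^+_{1,n}$). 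What the paper buys is generality and exact values for all $k$; what your route buys is brevity at a single $k$.

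Two concrete points need repair, though neither sinks the $k=3$ argument. First, your side claim that the bound ``does not grow with $k$'' is false for $K^+_{1,n}$: a new color spent on one vertex joined to everything produces a color class that is a star, and a star contains no $K^+_{1,n}$ (diameter $2$ versus $3$), which is exactly why $GR_k(K^+_{1,n})=5n/2+k-6$ increases with $k$ while $GR_k(K_{1,n})$ does not. Your intuition about the outer colors not joining large blocks misses this mechanism, and your $k=3$ rigidity proof must explicitly exclude such a dominating vertex — it can, since the remaining $5n/2-5\ge 2n-1=R(K^+_{1,n},K^+_{1,n})$ vertices would then carry a two-colored monochromatic $K^+_{1,n}$. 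Second, the parenthetical ``the monochromatic $K_{1,n}$ (and therefore $K^+_{1,n}$)'' asserts precisely the implication that fails in general (again because of star-shaped color classes); in your final step it must be justified from the blow-up structure, namely that any leaf of the forced $K_{1,n}$ lies in a part whose same-colored neighborhood contains at least $n-2\ge 2$ vertices outside the star, furnishing the pendant edge. You flag this as a ``secondary obstacle,'' which is fair, but it is a genuine step, and it is the same extension argument the paper carries out in its final two claims. With these repairs, and with the deferred rigidity lemma actually proved (the paper's proof is the same case analysis you sketch: Gallai partitions with two, three, four, or at least six parts are each eliminated, leaving exactly five parts with the unique size vector), your argument goes through.
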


The remaining content will be organized as follows. In Section~\ref{secfan}, we will address the conjecture concerning the Gallai-Ramsey number of $F_3$. The conjectures related to the Gallai-Ramsey numbers of kipases and wheels will be discussed in Section~\ref{seckipas}. Finally, in the last section of this paper, we will refute the conjecture concerning the relationship between Ramsey-full graphs and Gallai-Ramsey-full graphs.

\section{Gallai-Ramsey numbers of fans}\label{secfan}

\begin{figure}[H]
  \centering
  \scalebox{0.7}{ 
  \begin{tikzpicture}

  \def\radius{2cm}

  \def\pentagonRadius{3cm}
  
  \def\lineWidth{15pt}

  \coordinate (A) at ({\pentagonRadius*cos(90)}, {\pentagonRadius*sin(90)});
  \coordinate (B) at ({\pentagonRadius*cos(162)}, {\pentagonRadius*sin(162)});
  \coordinate (C) at ({\pentagonRadius*cos(234)}, {\pentagonRadius*sin(234)});
  \coordinate (D) at ({\pentagonRadius*cos(306)}, {\pentagonRadius*sin(306)});
  \coordinate (E) at ({\pentagonRadius*cos(18)}, {\pentagonRadius*sin(18)});

  \draw[red!50, line width=\lineWidth] (A) -- (B);
  \draw[red!50, line width=\lineWidth] (B) -- (C);
  \draw[red!50, line width=\lineWidth] (C) -- (D);
  \draw[red!50, line width=\lineWidth] (D) -- (E);
  \draw[red!50, line width=\lineWidth] (E) -- (A);

  \draw[blue!50, line width=\lineWidth] (A) -- (C);
  \draw[blue!50, line width=\lineWidth] (C) -- (E);
  \draw[blue!50, line width=\lineWidth] (E) -- (B);
  \draw[blue!50, line width=\lineWidth] (B) -- (D);
  \draw[blue!50, line width=\lineWidth] (D) -- (A);

  \foreach \i in {A,B,C,D,E}{
      \node[draw, fill=white, circle, minimum size=\radius] at (\i) {};
  }

  \def\vertexSize{8pt}

  \begin{scope}
    \coordinate (V1) at ($(A)+(-0.5,0)$);
    \coordinate (V2) at ($(A)+(0.5,0)$);
    \draw[line width=2pt, black] (V1) -- (V2);
    \node at (V1) [draw, circle, fill=white, minimum size=\vertexSize, inner sep=0pt] {};
    \node at (V2) [draw, circle, fill=white, minimum size=\vertexSize, inner sep=0pt] {};
  \end{scope}

  \begin{scope}
    \coordinate (V3) at ($(B)+(-0.5,-0.3)$);
    \coordinate (V4) at ($(B)+(0.5,-0.3)$);
    \coordinate (V5) at ($(B)+(0,{sqrt(3)/2-0.3})$);
    \draw[red, line width=2pt] (V3) -- (V4);
    \draw[red, line width=2pt] (V4) -- (V5);
    \draw[red, line width=2pt] (V5) -- (V3);
    \node at (V3) [draw, circle, fill=white, minimum size=\vertexSize, inner sep=0pt] {};
    \node at (V4) [draw, circle, fill=white, minimum size=\vertexSize, inner sep=0pt] {};
    \node at (V5) [draw, circle, fill=white, minimum size=\vertexSize, inner sep=0pt] {};    
  \end{scope}

  \begin{scope}
    \coordinate (V6) at ($(C)+(-0.5,-0.3)$);
    \coordinate (V7) at ($(C)+(0.5,-0.3)$);
    \coordinate (V8) at ($(C)+(0,{sqrt(3)/2-0.3})$);
    \draw[blue, line width=2pt] (V6) -- (V7);
    \draw[blue, line width=2pt] (V7) -- (V8);
    \draw[blue, line width=2pt] (V8) -- (V6);
    \node at (V6) [draw, circle, fill=white, minimum size=\vertexSize, inner sep=0pt] {};
    \node at (V7) [draw, circle, fill=white, minimum size=\vertexSize, inner sep=0pt] {};
    \node at (V8) [draw, circle, fill=white, minimum size=\vertexSize, inner sep=0pt] {};    
  \end{scope}

  \begin{scope}
    \coordinate (V9) at ($(D)+(-0.5,-0.3)$);
    \coordinate (V10) at ($(D)+(0.5,-0.3)$);
    \coordinate (V11) at ($(D)+(0,{sqrt(3)/2-0.3})$);
    \draw[blue, line width=2pt] (V9) -- (V10);
    \draw[blue, line width=2pt] (V10) -- (V11);
    \draw[blue, line width=2pt] (V11) -- (V9);
    \node at (V9) [draw, circle, fill=white, minimum size=\vertexSize, inner sep=0pt] {};
    \node at (V10) [draw, circle, fill=white, minimum size=\vertexSize, inner sep=0pt] {};
    \node at (V11) [draw, circle, fill=white, minimum size=\vertexSize, inner sep=0pt] {};
  \end{scope}

  \begin{scope}
    \coordinate (V12) at ($(E)+(-0.5,-0.3)$);
    \coordinate (V13) at ($(E)+(0.5,-0.3)$);
    \coordinate (V14) at ($(E)+(0,{sqrt(3)/2-0.3})$);
    \draw[red, line width=2pt] (V12) -- (V13);
    \draw[red, line width=2pt] (V13) -- (V14);
    \draw[red, line width=2pt] (V14) -- (V12);
    \node at (V12) [draw, circle, fill=white, minimum size=\vertexSize, inner sep=0pt] {};
    \node at (V13) [draw, circle, fill=white, minimum size=\vertexSize, inner sep=0pt] {};
    \node at (V14) [draw, circle, fill=white, minimum size=\vertexSize, inner sep=0pt] {};
  \end{scope}
\end{tikzpicture}}
\caption{The graph $G_3^i$, where black represents color $i$.}
\label{fig:fan}
\end{figure}
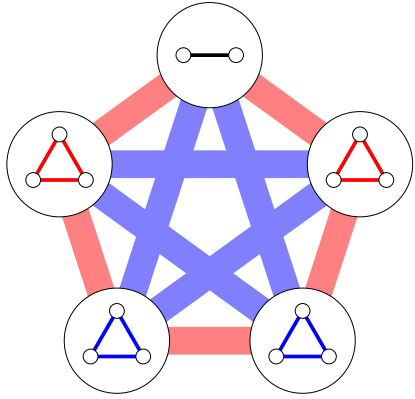

When $k=2$, we need the following result. In fact, Figure~\ref{fig:fan} is derived from a slight modification of the Ramsey graph of $R(F_3,F_3)$ in \cite{Zhao2022}. If the black edge in Figure~\ref{fig:fan} is contracted into a single vertex, the resulting graph is the Ramsey graph of $R(F_3,F_3)$.

\begin{lemma}[Zhao and Wei~\cite{Zhao2022}]
  $R(F_3,F_3) = 14$.
\end{lemma}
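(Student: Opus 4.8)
The statement $R(F_3,F_3)=14$ splits into a lower bound and an upper bound, and both are most naturally phrased through the following reformulation: a red--blue colouring of $K_n$ contains a monochromatic $F_3$ in colour $c$ if and only if some vertex $v$ has a colour-$c$ neighbourhood $N_c(v)$ that spans a colour-$c$ matching of size $3$. Thus ``no monochromatic $F_3$'' is exactly the condition that $\nu_c(v)\le 2$ for every vertex $v$ and every colour $c$, where $\nu_c(v)$ denotes the maximum size of a colour-$c$ matching inside $N_c(v)$. I would organise the whole argument around this local matching condition.

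For the lower bound $R(F_3,F_3)\ge 14$, the plan is to exhibit the $13$-vertex colouring obtained from Figure~\ref{fig:fan} by contracting the black edge, i.e.\ the blow-up of $C_5$ with part sizes $1,3,3,3,3$ in which edges between consecutive parts are red, edges between non-consecutive parts are blue, and the internal edges realise the colour pattern (single vertex; red $K_3$; blue $K_3$; blue $K_3$; red $K_3$). To certify that this colouring has no monochromatic $F_3$, I would run through each vertex and compute the matching number of each of its two monochromatic neighbourhoods; in every case the relevant neighbourhood splits as two disjoint monochromatic triangles (or a triangle plus isolated vertices), so its monochromatic matching number is at most $2$. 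The bookkeeping is halved by the symmetry that simultaneously swaps the two colours and applies the relabelling $A\mapsto A$, $B\mapsto D$, $C\mapsto B$, $D\mapsto E$, $E\mapsto C$, which carries the blue structure onto the red structure; hence checking the red neighbourhoods suffices.

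For the upper bound $R(F_3,F_3)\le 14$, I would argue by contradiction: fix a red--blue colouring of $K_{14}$ with $\nu_c(v)\le 2$ for all $v$ and $c$. Since each vertex has degree $13$, it has degree at least $7$ in some colour, say $d_r(v)\ge 7$. Deleting from $N_r(v)$ the at most four vertices covered by a maximum red matching leaves a red-independent set, i.e.\ a blue clique, of size at least $d_r(v)-4$; in particular a large monochromatic degree forces a large clique of the opposite colour. I would combine this clique-extraction with the degree identity $d_r(v)+d_b(v)=13$ to first bound all monochromatic degrees (a blue clique on $7$ vertices already contains $F_3$, so $d_r(v)\le 10$ and symmetrically $d_b(v)\le 10$), and then run a short case analysis on the local structure of the matching-number-$2$ neighbourhoods to locate a monochromatic $F_3$, contradicting the assumption.

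The main obstacle is precisely this last structural analysis. The condition ``no matching of size $3$'' does not force a small vertex cover: a graph with $\nu=2$ can be two disjoint triangles or a $K_5$, so the clique produced by deleting a maximum matching is not always as large as one would like, and the opposite-colour cliques coming from different vertices must be made to interact. The hard part will therefore be to handle these Gallai--Edmonds-type configurations uniformly --- showing that whenever the red neighbourhood of a high red-degree vertex fails to yield a red $F_3$, the resulting blue-dense structure, together with a suitable second centre, always completes a blue $F_3$.
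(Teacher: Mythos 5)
First, a point of reference: the paper does not prove this lemma at all --- it is quoted as a result of Zhao and Wei~\cite{Zhao2022}, and the only related material in the paper is the remark that contracting the black edge of Figure~\ref{fig:fan} recovers Zhao and Wei's $13$-vertex Ramsey graph. So your attempt can only be compared against that cited result, not against an internal argument.

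Your lower bound is correct and is exactly the construction the paper alludes to: the blow-up of $C_5$ with part sizes $1,3,3,3,3$, cycle edges red, diagonal edges blue, and internal triangles coloured (red, blue, blue, red) going around from the singleton. Your verification scheme also works: for every vertex and every colour, the relevant monochromatic neighbourhood is either two disjoint triangles, a triangle plus isolated vertices, or a triangle together with a complete bipartite piece admitting no third independent edge, so $\nu_c\le 2$ throughout and no monochromatic $F_3$ exists. The genuine gap is the upper bound $R(F_3,F_3)\le 14$. What you actually establish there is: the reformulation of ``monochromatic $F_3$'' as a local matching condition $\nu_c(v)\ge 3$; the fact that the complement of a maximum matching inside $N_r(v)$ is a blue clique; and hence $d_c(v)\le 10$ for every vertex and colour (since a blue $K_7$ contains a blue $F_3$). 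Everything after that is an announcement, not a proof, and you concede this yourself in your final paragraph. That remaining ``short case analysis'' is the entire content of the theorem: one must classify, for each vertex of high monochromatic degree, the Gallai--Edmonds-type structures with $\nu\le 2$ (two disjoint triangles, $K_5$, a dominating edge, etc.) that its neighbourhood can induce, and then show that these structures at different centres cannot coexist on $14$ vertices. This is delicate precisely because $F_3$ is the exceptional fan whose Ramsey number ($14$) strictly exceeds the generic lower bound $4n+1=13$, so near-extremal colourings of $K_{13}$ are genuinely varied and the contradiction on $K_{14}$ does not fall out of degree counting alone; it occupies a substantial case analysis in Zhao and Wei's paper. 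As it stands, your proposal proves $R(F_3,F_3)\ge 14$ and the degree bound, but not the lemma.
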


For even $k \ge 4$, we can raise the lower bound of $GR_k(F_3)$ by using the Ramsey graph of $(F_3, F_3)$. Let $(K_5, c)$ denote the complete graph on five vertices with a red-blue edge coloring, where both the red and blue edges induce a 5-cycle, and the red 5-cycle is $v_1v_2v_3v_4v_5v_1$. Now, we replace the red and blue colors in $(K_5, c)$ with colors $1$ and $2$, respectively. Expand $v_2$ and $v_5$ into a triangle of color $1$, expand $v_3$ and $v_4$ into a triangle of color $2$, and expand $v_1$ into a $K_2$ of color $i$, where $3 \le i \le 4$. This graph is denoted by $G_3^i$, as shown in Figure~\ref{fig:fan}. It can be easily verified that $|G_3^i|=14$, and $G_3^i$ does not contain a monochromatic $F_3$.

Next, we replace the red and blue colors in $(K_5, c)$ with colors $3$ and $4$, respectively. Then expand $v_1$, $v_3$, and $v_4$ into copies of $G_3^3$, and expand $v_2$ and $v_5$ into copies of $G_3^4$. This graph is denoted by $G_4$. Clearly, $|G_4|=70$. A triangle of color $3$ must include an edge of color $3$ from $G_3^3$. If $G_4$ contains an $F_3$ of color $3$, this $F_3$ must contain three color-3 edges from the three copies of $G_3^3$. Based on the relative positions of the three $G_3^3$ copies, this is impossible. Similarly, we can verify that $G_4$ does not contain an $F_3$ of color $4$. Therefore,
\[
GR_4(F_3) \ge 71\,.
\]

For $k \ge 6$, the construction of $G_k$ is as follows. First, replace the red and blue colors in $(K_5, c)$ with colors $k-1$ and $k$, respectively. Then expand each vertex of $K_5$ into a copy of $G_{k-2}$. It can be easily verified that this graph contains no monochromatic $F_3$. Therefore,
\[
GR_k(F_3) \ge 14 \cdot 5^{\frac{k-2}{2}} + 1 \text{ for even } k \ge 4\,.
\]

\section{Gallai-Ramsey numbers of kipases and wheels}\label{seckipas}

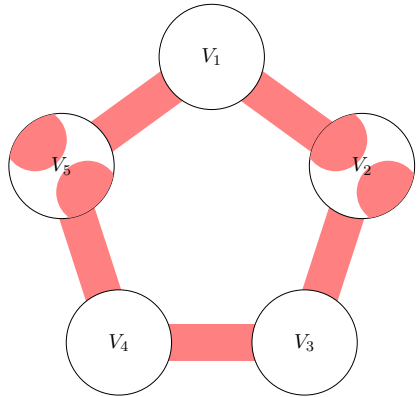
\begin{figure}[H]
  \centering
  \scalebox{0.7}{
  \begin{tikzpicture}

    \def\radius{2cm}

    \def\pentagonRadius{3cm}

    \def\lineWidth{20pt}
  
    \coordinate (A) at ({\pentagonRadius*cos(90)}, {\pentagonRadius*sin(90)});
    \coordinate (B) at ({\pentagonRadius*cos(162)}, {\pentagonRadius*sin(162)});
    \coordinate (C) at ({\pentagonRadius*cos(234)}, {\pentagonRadius*sin(234)});
    \coordinate (D) at ({\pentagonRadius*cos(306)}, {\pentagonRadius*sin(306)});
    \coordinate (E) at ({\pentagonRadius*cos(18)}, {\pentagonRadius*sin(18)});
  
    \draw[red!50, line width=\lineWidth] (A) -- (B);
    \draw[red!50, line width=\lineWidth] (B) -- (C);
    \draw[red!50, line width=\lineWidth] (C) -- (D);
    \draw[red!50, line width=\lineWidth] (D) -- (E);
    \draw[red!50, line width=\lineWidth] (E) -- (A);

    \foreach \i in {A,B,C,D,E}{
        \node[draw, fill=white, circle, minimum size=\radius] at (\i) {};
    }

    \begin{scope}
        \clip (B) circle (\radius/2);
        \fill[red!50] ($(B)+(-0.5,0.5)$) circle (0.6cm);
        \fill[red!50] ($(B)+(0.5,-0.5)$) circle (0.6cm);
    \end{scope}
  
    \begin{scope}
        \clip (E) circle (\radius/2);
        \fill[red!50] ($(E)+(-0.5,0.5)$) circle (0.6cm);
        \fill[red!50] ($(E)+(0.5,-0.5)$) circle (0.6cm);
    \end{scope}
  
    \node at (A) {$V_1$};
    \node at (B) {$V_5$};
    \node at (C) {$V_4$};
    \node at (D) {$V_3$};
    \node at (E) {$V_2$};
  \end{tikzpicture}}
  \caption{The graph of Lemma~\ref{basiclemma}}
  \label{fig:lemma2}
\end{figure}

The following lemma essentially illustrates why we can add vertices to establish a better lower bound.
\begin{lemma}\label{basiclemma}
  We perform a blow-up procedure on a red 5-cycle $v_1v_2v_3v_4v_5v_1$. For each $i \in [5]$, let the clique formed by the expansion of vertex $v_i$ be denoted as $V_i$. In the subgraphs $G[V_1]$, $G[V_3]$, and $G[V_4]$, there are no red edges. Meanwhile, in the subgraphs $G[V_2]$ and $G[V_5]$, the maximum degree of the subgraph induced by the red edges is at most $m/2-1$, and each red connected component contains at most $m-1$ vertices (as illustrated in Fig.~\ref{fig:lemma2}). Then, there is no red $\widehat{K}_m$ in the entire graph.
\end{lemma}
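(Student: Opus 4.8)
The plan is to translate the claim into a statement purely about red neighborhoods. Since $\widehat{K}_m = K_1 + P_m$, a red copy of $\widehat{K}_m$ exists precisely when some vertex $c$ (the image of the central $K_1$) is joined by red edges to all $m$ vertices of a red path on $m$ vertices; equivalently, when the red neighborhood $N(c)$ of some $c$ contains a red $P_m$ (with all $m-1$ path edges red). So I would fix an arbitrary candidate center $c$ and show that the red graph induced on $N(c)$ has no red $P_m$, splitting into cases according to the blob containing $c$.

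First I would record the red adjacencies produced by the blow-up: between two consecutive blobs $V_i, V_{i+1}$ every edge is red, between two nonconsecutive blobs no edge is red, there are no red edges inside $V_1, V_3, V_4$, and the only internal red edges lie inside $V_2$ and $V_5$, subject to the stated degree and component bounds. Reading off neighborhoods, a vertex $c \in V_1$ has $N(c) = V_2 \cup V_5$, a vertex $c \in V_3$ has $N(c) = V_2 \cup V_4$, a vertex $c \in V_4$ has $N(c) = V_3 \cup V_5$, while a vertex $c \in V_2$ has $N(c) = V_1 \cup V_3 \cup S$ with $S := N(c) \cap V_2$, and symmetrically for $c \in V_5$.

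For $c$ in $V_1$, $V_3$, or $V_4$, the two blobs making up $N(c)$ are joined by no red edges (they correspond to a diagonal of the $5$-cycle), so the induced red graph on $N(c)$ is the disjoint union of the red graphs on the two blobs. In each blob every red component has at most $m-1$ vertices: inside $V_1, V_3, V_4$ there are no internal red edges at all, so the components are singletons, and inside $V_2, V_5$ the hypothesis caps components at $m-1$. A red path is connected, hence confined to a single red component, so it spans at most $m-1$ vertices and cannot be a $P_m$. This settles three of the five cases by a component-size count.

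The crux is the case $c \in V_2$ (the case $c \in V_5$ being identical after relabeling). Here the red graph on $N(c) = V_1 \cup V_3 \cup S$ has $V_1$ and $V_3$ red-independent with no red edges between them, each completely red-joined to the small set $S$, and $|S| \le m/2 - 1$ by the maximum-degree hypothesis. The key observation is that $V_1 \cup V_3$ is an independent set in the red graph, so in any red $P_m$ inside $N(c)$ at most $\cei{m/2}$ of the $m$ vertices can come from $V_1 \cup V_3$ (an independent set in a path on $m$ vertices has size at most $\cei{m/2}$), forcing at least $\flo{m/2}$ of the vertices to lie in $S$. Since $\flo{m/2} > m/2 - 1 \ge |S|$, this is impossible, so no red $P_m$ lives in $N(c)$. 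Combining all cases shows that no vertex can serve as the center of a red $\widehat{K}_m$, which proves the lemma. The main obstacle is exactly this final counting step: recognizing that the structure between the two red-independent blobs and $S$ reduces the problem to bounding an independent set inside a path, which is precisely what the degree cap $m/2 - 1$ is tailored to defeat.
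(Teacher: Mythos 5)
Your proof is correct and follows essentially the same route as the paper's: for a center in $V_1$, $V_3$, or $V_4$, the red path would be a connected red subgraph confined to a component of size at most $m-1$; for a center in $V_2$ or $V_5$, the path vertices outside that blob form an independent set of size at most $\lceil m/2\rceil$, forcing at least $m/2$ red neighbors of the center inside the blob, contradicting the degree cap $m/2-1$. The only cosmetic difference is that you argue directly that no vertex can serve as a center, while the paper phrases the same two counting arguments as a proof by contradiction.
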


\begin{proof}
  We proceed by contradiction. Assume there exists a red $\widehat{K}_m$ in the graph. Let $v$ be the center vertex of $\widehat{K}_m$. If $v$ is in $V_1$, the vertices connected to $v$ by red edges must lie in $V_2 \cup V_5$. Hence, the non-center vertices of $\widehat{K}_m$ forming $P_m$ must be contained entirely in either $V_2$ or $V_5$. According to the given conditions, each red connected component in $V_2$ and $V_5$ contains no more than $m-1$ vertices. This contradiction shows that $v$, the center of $\widehat{K}_m$, cannot be in $V_1$. A similar argument proves that $v$ is not in $V_3 \cup V_4$. Thus, the center vertex $v$ must be in either $V_2$ or $V_5$. Without loss of generality, assume $v \in V_2$.
  
  Among the non-center vertices of $\widehat{K}_m$ forming $P_m$, each independent set contains at most $m/2$ vertices. Since there are no red edges in $G[V_1]$ and $G[V_4]$, at least $m/2$ vertices of $P_m$ must be in $V_2$. In other words, $v$ in $V_2$ must be connected by at least $m/2$ red edges, which contradicts the given conditions. 
\end{proof}

Since $GR_k(W_m) \ge GR_k(\widehat{K}_m)$, to prove Theorem~\ref{thmkipas}, it suffices to establish the theorem for $\widehat{K}_m$. We will begin by proving the case when $k$ is even.

\begin{lemma}
  For even $k \ge 2$ and even $m \ge 6$, we have
    \begin{equation*}
      GR_k(\widehat{K}_m) \ge  
      \begin{cases}
      R_2(\widehat{K}_m)+5(m-2)(5^{\frac{k-2}{2}}-1)-(m-4)(k-2), & \text{if } k \text{ is even and } m \equiv 0\ (\bmod{\ 4});\\
      R_2(\widehat{K}_m)+5(m-1)(5^{\frac{k-2}{2}}-1)-(m-2)(k-2), & \text{if } k \text{ is even and } m \equiv 2\ (\bmod{\ 4}).
      \end{cases}
    \end{equation*}
\end{lemma}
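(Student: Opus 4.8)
The plan is to read the bound as a construction problem: for each even $k$ and even $m\ge 6$ I would exhibit a Gallai $k$-coloring of the complete graph on $N-1$ vertices containing no monochromatic $\widehat{K}_m$, where $N$ is the claimed value; the inequality $GR_k(\widehat{K}_m)\ge N$ then follows from the definition. I would induct on $k$ in steps of two. For the base case $k=2$ both displayed formulas collapse to $R_2(\widehat{K}_m)$, and a $2$-coloring of $K_{R_2(\widehat{K}_m)-1}$ avoiding a monochromatic $\widehat{K}_m$ exists by the definition of $R_2(\widehat{K}_m)$; since a rainbow triangle needs three colors, every $2$-coloring is automatically a Gallai coloring, so this base case is immediate.

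For the step from $k-2$ to $k$, I would take the $2$-colored $(K_5,c)$ whose red and blue edges each form a $5$-cycle, recolor red and blue by the two fresh colors $k-1$ and $k$, and blow up each vertex $v_i$ into a part $V_i$. I would single out $v_1$ as an apex: its two neighbors in the color-$(k-1)$ cycle, namely $V_2,V_5$, carry color $k-1$ internally, while its two neighbors in the color-$k$ cycle, $V_3,V_4$, carry color $k$ internally, and the apex $V_1$ carries neither fresh color. Into $V_1$ I would place a level-$(k-2)$ construction from the induction hypothesis. Into each of $V_2,V_5$ (respectively $V_3,V_4$) I would place an enlarged gadget using the colors $1,\dots,k-1$ (respectively $1,\dots,k-2$ together with $k$), in which the fresh color occurs only as a sparse subgraph of maximum degree at most $m/2-1$ whose components have order at most $m-1$. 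This is precisely the hypothesis of Lemma~\ref{basiclemma} for color $k-1$ (clean parts $V_1,V_3,V_4$, special parts $V_2,V_5$) and of its color-$k$ analogue (clean parts $V_1,V_2,V_5$, special parts $V_3,V_4$), so neither fresh color can produce a monochromatic $\widehat{K}_m$.

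Three verifications remain. First, the Gallai property: since the reduced coloring of $K_5$ uses only two colors and each part is internally Gallai, the standard blow-up argument (a triangle either lies inside one part or meets the two-colored reduced graph) rules out rainbow triangles. Second, freeness in the old colors $1,\dots,k-2$: between distinct parts only colors $k-1,k$ appear, so any connected monochromatic subgraph in an old color lies inside a single part and is excluded there. Third, the counting: writing $N_g$ and $S_g$ for the orders of the apex-type and special-type gadgets at level $g=(k-2)/2$, the decomposition gives $N_g=N_{g-1}+4S_g$, while the special gadget obeys $S_g=5S_{g-1}+2(m-4)$ when $m\equiv 0\pmod 4$ and $S_g=5S_{g-1}+2(m-2)$ when $m\equiv 2\pmod 4$, with base value $S_0=m/2$ realized by a single monochromatic clique $K_{m/2}$ of maximum degree $m/2-1$. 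Solving these recurrences reproduces the two displayed expressions.

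The principal obstacle is the design and verification of the special gadget, which must simultaneously keep the fresh color within the degree-and-component bounds of Lemma~\ref{basiclemma}, remain free of a monochromatic $\widehat{K}_m$ in every old color after enlargement (so that its internal old-color coloring, now on $S_g>N_{g-1}$ vertices, is still $\widehat{K}_m$-free), and stay Gallai. The natural way to achieve all three at once is a parallel induction: the special gadget is itself a blow-up of $(K_5,c)$ in a lower color pair, built from five copies of the previous special gadget together with two extra monochromatic components attached in the fresh color. The order of these extra components is where the parity of $m$ enters, since blowing up an old-color edge into a complete bipartite graph threatens to complete an old-color kipas: the largest component compatible with maximum degree $m/2-1$ while avoiding an old-color $\widehat{K}_m$ has order $m-2$ when $m/2$ is odd (that is, $m\equiv 2\pmod 4$) but only $m-4$ when $m/2$ is even ($m\equiv 0\pmod 4$), which accounts for the differing constants $(m-2),(m-4)$ and the linear corrections $-(m-2)(k-2)$ versus $-(m-4)(k-2)$ in the statement. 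Pinning down this extremal order, and checking that attaching such components never creates an old-color kipas, is the crux of the argument.
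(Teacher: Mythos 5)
Your outer shell matches the paper's proof: the base case $k=2$ via the extremal graph of $R_2(\widehat{K}_m)$, the inductive step via a blow-up of the $2$-colored $K_5$ in the two fresh colors with an apex part carrying the level-$(k-2)$ construction and the four remaining parts carrying gadgets in which the adjacent fresh color is sparse, and the appeal to Lemma~\ref{basiclemma} (twice, once per fresh color) are all exactly what the paper does; your recurrences, though different in form from the paper's, even solve to the correct formulas. But the proof has a genuine gap precisely where you yourself locate ``the crux'': the existence of the special gadgets is never established. Your sketch --- five copies of the previous gadget arranged on a lower color pair, ``together with two extra monochromatic components attached in the fresh color'' --- leaves unspecified how those extra $2(m-4)$ (resp.\ $2(m-2)$) vertices are joined to the $5S_{g-1}$ existing vertices, which colors those edges receive, and why the result is still Gallai, still kipas-free in every old color, and still has the fresh color of maximum degree at most $m/2-1$ with components of order at most $m-1$. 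Note that the extra components cannot simply be fresh-color cliques: a clique on $m-4$ vertices has degree $m-5>m/2-1$ once $m\ge 12$. All of the real work of the lemma lives in this unbuilt gadget, so as it stands the argument is a plan rather than a proof. Your parity rationale is also unsubstantiated and, in fact, inconsistent with what a correct construction achieves: the paper's small $2$-colored graphs have orders $m-2$ (for $m\equiv 0 \pmod 4$) and $m-1$ (for $m\equiv 2 \pmod 4$), exceeding your claimed maxima of $m-4$ and $m-2$, and the parity enters only because a $(m/2-1)$-regular graph on $m-1$ vertices requires $(m/2-1)(m-1)$ to be even, which fails when $m\equiv 0\pmod 4$.

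For comparison, the paper fills this gap with a different gadget recursion. Its gadget $G_{2k-2}(\ell)$ is built from \emph{one} copy of the previous gadget together with \emph{four} copies of a composition $H_{2k-4}[F(\ell,\cdot)]$, where $H_{2k-4}$ is the Gallai-Ramsey extremal graph for the triangle in the old colors and $F(p,q)$ is an explicit small complete graph $2$-colored so that both color classes have maximum degree at most $m/2-1$ and components of order at most $m-1$ (two cliques $K_{m/2-1}$ joined completely when $m\equiv 0\pmod 4$; a decomposition of $K_{m-1}$ into two $(m/2-1)$-regular graphs when $m\equiv 2\pmod 4$). Old-color kipas-freeness of the composition is then immediate: an old-color triangle inside $H[F]$ would project to an old-color triangle in $H$, which cannot exist, while the two colors of $F$ are too sparse to host a $\widehat{K}_m$; the gadget's own recursion is then verified by another application of Lemma~\ref{basiclemma}. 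Some device of this kind --- keeping every color in every non-apex part either triangle-free or sparse --- is what your proposal still needs to supply.
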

\begin{proof}
We will construct six types of graphs, namely $G(i,j)$, $H_{2k}$, $F(p,q)$, $H_{2k}[F(p,q)]$, $G_{2k-2}(\ell)$, and $G_{2k}$. Among these, $G_{2k}$ is the extremal graph we ultimately need. To help readers better understand our constructions, each of the following paragraphs will focus on the construction of a specific graph.

We denote by $G(i,j)$ the complete graph on five vertices whose edges are colored with two colors, $i$ and $j$, such that the edges of color $i$ and the edges of color $j$ each induce a monochromatic cycle $C_5$.

Next, we construct $H_{2k}$. We denote $G(1,2)$ as $H_2$. For $k \ge 2$, the graph $H_{2k}$ is obtained by replacing each vertex of the graph $G(2k-1,2k)$ with a copy of the graph $H_{2k-2}$. It is known that $H_{2k}$ is, in fact, the extremal graph of $GR_{2k}(K_3)$. Clearly, we have $|H_{2k}|=5^k$.

Next, we construct $F(p,q)$. For $m \ge 8$ and $m \equiv 0 \pmod{4}$, we denote $F(p,q)$ as the graph with $m-2$ vertices, which consists of two copies of the complete graph with $m/2-1$ vertices, where all edges within each copy are colored $p$, and all edges between the two copies are colored $q$. For $m \ge 6$ and $m \equiv 2 \pmod{4}$, we define $F(p,q)$ as a complete graph with $m-1$ vertices that is $2$-edge colored, such that the induced subgraph of the edges of color $p$ and the induced subgraph of the edges of color $q$ both form $(m/2-1)$-regular graphs. Since $m/2-1$ is even, such a coloring is guaranteed to exist. For instance, when $m=6$, we have $F(p,q) = G(p,q)$. It is straightforward to verify that for any even $m \ge 6$, the induced subgraph formed by all edges of color $p$ in $F(p,q)$ has a maximum degree of at most $m/2-1$, and the number of vertices in each connected component is at most $m-1$. The same properties hold for the induced subgraph formed by all edges of color $q$ in $F(p,q)$.

Having constructed the graphs $H_{2k}$ and $F(p,q)$, we now proceed to create their composite graph $H_{2k}[F(p,q)]$. We define $H_0[F(p,q)] = F(p,q)$. For $k \ge 1$, the graph $H_{2k}[F(p,q)]$ is obtained by replacing each vertex of the graph $H_{2k}$ with a copy of the graph $F(p,q)$. Therefore, we have $|H_{2k}[F(p,q)]| = (m-2)5^k$ for $m \ge 8$ and $m \equiv 0 \pmod{4}$; and $|H_{2k}[F(p,q)]| = (m-1)5^k$ for $m \ge 6$ and $m \equiv 2 \pmod{4}$.

\begin{figure}[H]
  \centering
  \scalebox{0.7}{
\begin{tikzpicture}

  \def\radius{2cm}

  \def\pentagonRadius{3cm}

  \def\lineWidth{15pt}

  \coordinate (A) at ({\pentagonRadius*cos(90)}, {\pentagonRadius*sin(90)});
  \coordinate (B) at ({\pentagonRadius*cos(162)}, {\pentagonRadius*sin(162)});
  \coordinate (C) at ({\pentagonRadius*cos(234)}, {\pentagonRadius*sin(234)});
  \coordinate (D) at ({\pentagonRadius*cos(306)}, {\pentagonRadius*sin(306)});
  \coordinate (E) at ({\pentagonRadius*cos(18)}, {\pentagonRadius*sin(18)});

  \draw[red!50, line width=\lineWidth] (A) -- (B);
  \draw[red!50, line width=\lineWidth] (B) -- (C);
  \draw[red!50, line width=\lineWidth] (C) -- (D);
  \draw[red!50, line width=\lineWidth] (D) -- (E);
  \draw[red!50, line width=\lineWidth] (E) -- (A);

  \draw[blue!50, line width=\lineWidth] (A) -- (C);
  \draw[blue!50, line width=\lineWidth] (C) -- (E);
  \draw[blue!50, line width=\lineWidth] (E) -- (B);
  \draw[blue!50, line width=\lineWidth] (B) -- (D);
  \draw[blue!50, line width=\lineWidth] (D) -- (A);

  \foreach \i in {A,B,C,D,E}{
      \node[draw, fill=white, circle, minimum size=\radius] at (\i) {};
  }

  \def\vertexSize{8pt}  

  \begin{scope}
    \coordinate (V1) at ($(A)+({0.6*cos(90)},{0.6*sin(90)})$);
    \coordinate (V2) at ($(A)+({0.6*cos(210)},{0.6*sin(210)})$);
    \coordinate (V3) at ($(A)+({0.6*cos(330)},{0.6*sin(330)})$);
    \draw[line width=2pt, black] (V1) -- (V2) -- (V3) -- cycle;
    \node at (V1) [draw, circle, fill=white, minimum size=\vertexSize, inner sep=0pt] {};
    \node at (V2) [draw, circle, fill=white, minimum size=\vertexSize, inner sep=0pt] {};
    \node at (V3) [draw, circle, fill=white, minimum size=\vertexSize, inner sep=0pt] {};
  \end{scope}

  \begin{scope}
    \coordinate (V4) at ($(B)+({0.7*cos(0)},{0.7*sin(0)})$);
    \coordinate (V5) at ($(B)+({0.7*cos(72)},{0.7*sin(72)})$);
    \coordinate (V6) at ($(B)+({0.7*cos(144)},{0.7*sin(144)})$);
    \coordinate (V7) at ($(B)+({0.7*cos(216)},{0.7*sin(216)})$);
    \coordinate (V8) at ($(B)+({0.7*cos(288)},{0.7*sin(288)})$);
    \draw[red, line width=2pt] (V4) -- (V5) -- (V6) -- (V7) -- (V8) -- cycle;
    \draw[line width=2pt] (V4) -- (V6) -- (V8) -- (V5) -- (V7) -- cycle;
    \node at (V4) [draw, circle, fill=white, minimum size=\vertexSize, inner sep=0pt] {};
    \node at (V5) [draw, circle, fill=white, minimum size=\vertexSize, inner sep=0pt] {};
    \node at (V6) [draw, circle, fill=white, minimum size=\vertexSize, inner sep=0pt] {};
    \node at (V7) [draw, circle, fill=white, minimum size=\vertexSize, inner sep=0pt] {};
    \node at (V8) [draw, circle, fill=white, minimum size=\vertexSize, inner sep=0pt] {};
  \end{scope}

  \begin{scope}
    \coordinate (V9) at ($(C)+({0.7*cos(0)},{0.7*sin(0)})$);
    \coordinate (V10) at ($(C)+({0.7*cos(72)},{0.7*sin(72)})$);
    \coordinate (V11) at ($(C)+({0.7*cos(144)},{0.7*sin(144)})$);
    \coordinate (V12) at ($(C)+({0.7*cos(216)},{0.7*sin(216)})$);
    \coordinate (V13) at ($(C)+({0.7*cos(288)},{0.7*sin(288)})$);
    \draw[blue, line width=2pt] (V9) -- (V10) -- (V11) -- (V12) -- (V13) -- cycle;
    \draw[line width=2pt] (V9) -- (V11) -- (V13) -- (V10) -- (V12) -- cycle;
    \node at (V9) [draw, circle, fill=white, minimum size=\vertexSize, inner sep=0pt] {};
    \node at (V10) [draw, circle, fill=white, minimum size=\vertexSize, inner sep=0pt] {};
    \node at (V11) [draw, circle, fill=white, minimum size=\vertexSize, inner sep=0pt] {};
    \node at (V12) [draw, circle, fill=white, minimum size=\vertexSize, inner sep=0pt] {};
    \node at (V13) [draw, circle, fill=white, minimum size=\vertexSize, inner sep=0pt] {};
  \end{scope}

  \begin{scope}
    \coordinate (V14) at ($(D)+({0.7*cos(0)},{0.7*sin(0)})$);
    \coordinate (V15) at ($(D)+({0.7*cos(72)},{0.7*sin(72)})$);
    \coordinate (V16) at ($(D)+({0.7*cos(144)},{0.7*sin(144)})$);
    \coordinate (V17) at ($(D)+({0.7*cos(216)},{0.7*sin(216)})$);
    \coordinate (V18) at ($(D)+({0.7*cos(288)},{0.7*sin(288)})$);
    \draw[blue, line width=2pt] (V14) -- (V15) -- (V16) -- (V17) -- (V18) -- cycle;
    \draw[line width=2pt] (V14) -- (V16) -- (V18) -- (V15) -- (V17) -- cycle;
    \node at (V14) [draw, circle, fill=white, minimum size=\vertexSize, inner sep=0pt] {};
    \node at (V15) [draw, circle, fill=white, minimum size=\vertexSize, inner sep=0pt] {};
    \node at (V16) [draw, circle, fill=white, minimum size=\vertexSize, inner sep=0pt] {};
    \node at (V17) [draw, circle, fill=white, minimum size=\vertexSize, inner sep=0pt] {};
    \node at (V18) [draw, circle, fill=white, minimum size=\vertexSize, inner sep=0pt] {};
  \end{scope}

  \begin{scope}
    \coordinate (V19) at ($(E)+({0.7*cos(0)},{0.7*sin(0)})$);
    \coordinate (V20) at ($(E)+({0.7*cos(72)},{0.7*sin(72)})$);
    \coordinate (V21) at ($(E)+({0.7*cos(144)},{0.7*sin(144)})$);
    \coordinate (V22) at ($(E)+({0.7*cos(216)},{0.7*sin(216)})$);
    \coordinate (V23) at ($(E)+({0.7*cos(288)},{0.7*sin(288)})$);
    \draw[red, line width=2pt] (V19) -- (V20) -- (V21) -- (V22) -- (V23) -- cycle;
    \draw[line width=2pt] (V19) -- (V21) -- (V23) -- (V20) -- (V22) -- cycle;
    \node at (V19) [draw, circle, fill=white, minimum size=\vertexSize, inner sep=0pt] {};
    \node at (V20) [draw, circle, fill=white, minimum size=\vertexSize, inner sep=0pt] {};
    \node at (V21) [draw, circle, fill=white, minimum size=\vertexSize, inner sep=0pt] {};
    \node at (V22) [draw, circle, fill=white, minimum size=\vertexSize, inner sep=0pt] {};
    \node at (V23) [draw, circle, fill=white, minimum size=\vertexSize, inner sep=0pt] {};
  \end{scope}
\end{tikzpicture}}
\caption{The graph $G_2(\ell)$ for $m=6$, where red, blue, and black represent colors $1$,  $2$, and $\ell$, respectively.}
\label{fig:G2ell}
\end{figure}
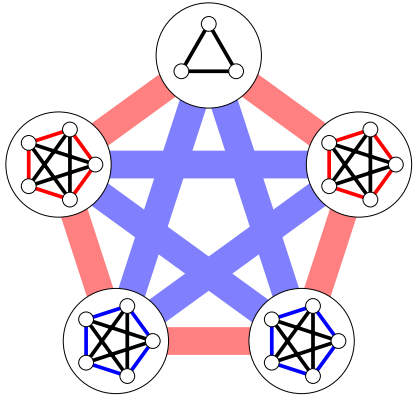

For a positive integer $k$, we construct $G_{2k-2}(\ell)$ as follows, where $\ell$ does not belong to the color set $[2k-2]$. When $k=1$, the graph $G_0(\ell)$ is a $K_{m/2}$, where all edges are colored $\ell$. When $k \ge 2$, the graph $G_{2k-2}(\ell)$ is obtained by blowing up $G(2k-3, 2k-2)$. We blow up a vertex $v$ of $G(2k-3, 2k-2)$ into $G_{2k-4}(\ell)$, the two vertices adjacent to $v$ through edges of color $2k-3$ are expanded into $H_{2k-4}[F(\ell,2k-3)]$, and the two vertices adjacent to $v$ through edges of color $2k-2$ are expanded into $H_{2k-4}[F(\ell,2k-2)]$. Therefore, for $k \ge 2$, we have
\[
|G_{2k-2}(\ell)| = |G_{2k-4}(\ell)| + 4 \cdot (m-2)5^{k-2} \quad \text{for}\ m \ge 8\ \text{and}\ m \equiv 0\ (\bmod{\ 4}),
\]
and
\[
|G_{2k-2}(\ell)| = |G_{2k-4}(\ell)| + 4 \cdot (m-1)5^{k-2} \quad \text{for}\ m \ge 6\ \text{and}\ m \equiv 2\ (\bmod{\ 4}).
\]
Furthermore, it can be derived that
\[
|G_{2k-2}(\ell)| = (m-2)5^{k-1} - m/2 + 2 \quad \text{for}\ m \ge 8\ \text{and}\ m \equiv 0\ (\bmod{4}),
\]
and
\[
|G_{2k-2}(\ell)| = (m-1)5^{k-1} - m/2 + 1 \quad \text{for}\ m \ge 6\ \text{and}\ m \equiv 2\ (\bmod{4}).
\]

Finally, we construct the extremal graph $G_{2k}$ that we require. When $k=1$, the extremal graph $G_2$ is a complete graph on $R(\widehat{K}_m, \widehat{K}_m)-1$ vertices with a 2-edge-coloring, where each edge is colored either 1 or 2, and there is no monochromatic $\widehat{K}_m$. In other words, $G_2$ is the extremal graph corresponding to the 2-color Ramsey number $R(\widehat{K}_m, \widehat{K}_m)$. Hence, $|G_2| = R(\widehat{K}_m, \widehat{K}_m) - 1$.

When $k \ge 2$, the extremal graph $G_{2k}$ is obtained by blowing up $G(2k-1, 2k)$. We blow up a vertex $v$ into $G_{2k-2}$, the two vertices adjacent to $v$ via edges of color $2k-1$ are expanded into $G_{2k-2}(2k-1)$, and the two vertices adjacent to $v$ via edges of color $2k$ are expanded into $G_{2k-2}(2k)$. Thus, for $m \ge 8$ and $m \equiv 0 \pmod{4}$, we have
\begin{equation}\label{equation_2}
|G_{2k}| = |G_{2k-2}| + 4(m-2)5^{k-1} - 2m + 8 = R_2(\widehat{K}_m) - 1 + 5(m-2)(5^{k-1} - 1) - 2(m-4)(k-1)\,;
\end{equation}
and for $m \ge 6$ and $m \equiv 2 \pmod{4}$, we have
\begin{equation}\label{equation_3}
|G_{2k}| = |G_{2k-2}| + 4(m-1)5^{k-1} - 2m + 4 = R_2(\widehat{K}_m) - 1 + 5(m-1)(5^{k-1} - 1) - 2(m-2)(k-1)\,.
\end{equation}

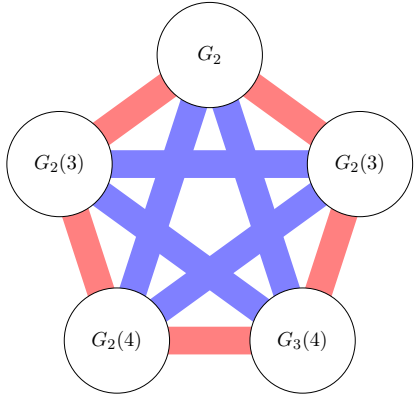
\begin{figure}[H]
  \centering
  \scalebox{0.7}{
\begin{tikzpicture}

  \def\radius{2cm}

  \def\pentagonRadius{3cm}

  \def\lineWidth{15pt}

  \coordinate (A) at ({\pentagonRadius*cos(90)}, {\pentagonRadius*sin(90)});
  \coordinate (B) at ({\pentagonRadius*cos(162)}, {\pentagonRadius*sin(162)});
  \coordinate (C) at ({\pentagonRadius*cos(234)}, {\pentagonRadius*sin(234)});
  \coordinate (D) at ({\pentagonRadius*cos(306)}, {\pentagonRadius*sin(306)});
  \coordinate (E) at ({\pentagonRadius*cos(18)}, {\pentagonRadius*sin(18)});

  \draw[red!50, line width=\lineWidth] (A) -- (B);
  \draw[red!50, line width=\lineWidth] (B) -- (C);
  \draw[red!50, line width=\lineWidth] (C) -- (D);
  \draw[red!50, line width=\lineWidth] (D) -- (E);
  \draw[red!50, line width=\lineWidth] (E) -- (A);

  \draw[blue!50, line width=\lineWidth] (A) -- (C);
  \draw[blue!50, line width=\lineWidth] (C) -- (E);
  \draw[blue!50, line width=\lineWidth] (E) -- (B);
  \draw[blue!50, line width=\lineWidth] (B) -- (D);
  \draw[blue!50, line width=\lineWidth] (D) -- (A);

  \foreach \i in {A,B,C,D,E}{
      \node[draw, fill=white, circle, minimum size=\radius] at (\i) {};
  }

  \node at (A) {$G_2$};
  \node at (B) {$G_2(3)$};
  \node at (C) {$G_2(4)$};
  \node at (D) {$G_3(4)$};
  \node at (E) {$G_2(3)$};
\end{tikzpicture}}
\caption{The graph $G_4$, where red and blue represent colors $3$ and $4$, respectively.}
\label{fig:G4}
\end{figure}

It is clear that there is no monochromatic $\widehat{K}_m$ in $G_2$. For $k \ge 2$, the graph $G_{2k}$ is constructed via a blow-up procedure. Thus, it is easy to see that there are no rainbow triangles. We verify that $G_{2k}$ contains no monochromatic $\widehat{K}_m$ through the following four claims.

\begin{claim}\label{basicproperty}
  For $k \ge 2$ and even $m \ge 6$, in the subgraph of $G_{2k-2}(\ell)$ induced by edges of color $\ell$, the degree of each vertex is at most $m/2-1$, and the number of vertices in each connected component is no more than $m-1$.
\end{claim}

\begin{proof}
  By definition, the graph $G_0(\ell)$ is a complete graph on $m/2$ vertices, with all edges colored $\ell$. In both $F(\ell, 2k-3)$ and $F(\ell, 2k-2)$, each vertex is adjacent to at most $m/2-1$ edges of color $2\ell$, and the number of vertices in both graphs is no more than $m-1$. Based on the blow-up construction of $G_{2k-2}(\ell)$, this claim follows easily.
\end{proof}

\begin{claim}\label{2k-12k}
  For $2k-1 \le i \le 2k$, there is no $\widehat{K}_m$ of color $i$ in $G_{2k}$.
\end{claim}

\begin{proof}
  We only need to prove the case for $i = 2k$. By symmetry, the other case can be similarly proven. The color set used in $G_{2k-2}$ is $\{1, 2, \ldots, 2k-2\}$, so there are no edges of color $2k$ in it. The color set used in $G_{2k-2}(2k-1)$ is $\{1, 2, \ldots, 2k-2, 2k-1\}$, so it also contains no edges of color $2k$. By Claim~\ref{basicproperty}, in the subgraph of $G_{2k-2}(2k)$ induced by edges of color $2k$, the degree of each vertex is at most $m/2-1$, and the number of vertices in each connected component is no more than $m-1$. Now, representing the color $2k$ with red, by Lemma~\ref{basiclemma}, there is no $\widehat{K}_m$ of color $2k$ in $G_{2k}$.
\end{proof}

\begin{claim}\label{2k-22k}
  For $k \ge 2$ and each $i \in [2k-2]$, there is no $\widehat{K}_m$ of color $i$ in $G_{2k-2}(2k-1)$ and $G_{2k-2}(2k)$.
\end{claim}

\begin{proof}
  We only need to prove that there is no $\widehat{K}_m$ of color $i$ in $G_{2k-2}(\ell)$ for each $i \in [2k-2]$, where the color $\ell$ is not in $[2k-2]$.
  
  First, consider the graph $H_{2k-4}[F(\ell,2k-3)]$. If it contains a triangle of color $i$ for some $i \in [2k-4]$, then $H_{2k-4}$ would also contain a triangle of color $i$, which leads to a contradiction. In the subgraph of $H_{2k-4}[F(\ell,2k-3)]$ induced by edges of color $2k-3$, the degree of each vertex is at most $m/2-1$, and each connected component contains no more than $m-1$ vertices. Therefore, for each $i \in [2k-2]$, there is no $\widehat{K}_m$ of color $i$ in $H_{2k-4}[F(\ell,2k-3)]$. By a similar argument, there is no $\widehat{K}_m$ of color $i$ in $H_{2k-4}[F(\ell,2k-2)]$.
  
  Now, let us proceed with mathematical induction on the positive integer $k$. First, we prove the base case. When $k=2$, the graph $G_2(\ell)$ is obtained by blowing up $G(1,2)$, where its five vertices are expanded into one $G_0(\ell)$, two $F(\ell,1)$, and two $F(\ell,2)$. All edges between the two $F(\ell,1)$ sets are of color $2$, and all edges between the two $F(\ell,2)$ sets are of color $1$. The number of vertices in $F(\ell,1)$ is no more than $m-1$, and each vertex is incident to at most $m/2-1$ edges of color $1$. Assigning red to represent the $1$-color, by Lemma~\ref{basiclemma}, there is no $\widehat{K}_m$ of color $1$ in $G_2(\ell)$. Similarly, $G_2(\ell)$ contains no $\widehat{K}_m$ of color $2$.
  
  When $k \ge 3$, assume that there is no $\widehat{K}_m$ of color $i$ in $G_{2(k-1)-2}(\ell)$ for each $i \in [2(k-1)-2]$. Now consider the graph $G_{2k-2}(\ell)$. We know that $G_{2k-2}(\ell)$ is obtained by blowing up $G(2k-3, 2k-2)$. From the above analysis, a $\widehat{K}_m$ of color $i$ cannot appear in any of the five parts. Therefore, if a monochromatic $\widehat{K}_m$ exists, it must be either color $2k-3$ or color $2k-2$.
  
  We map the color $2k-3$ to correspond to the red color in Lemma~\ref{basiclemma}. By applying the lemma, it follows that there is no $\widehat{K}_m$ with color $2k-3$. By symmetry, the same argument applies to show that no monochromatic $\widehat{K}_m$ exists with color $2k-2$. Thus, there is no $\widehat{K}_m$ of color $i$ in $G_{2k-2}(\ell)$ for each $i \in [2k-2]$.
\end{proof}

\begin{claim}\label{finalclaim}
  For each $i \in [2k]$, there is no $\widehat{K}_m$ of color $i$ in $G_{2k}$.
\end{claim}

\begin{proof}
  We proceed by induction on the positive integer $k$. When $k=1$, it is clear that there is no monochromatic $\widehat{K}_m$ in $G_2$. For $k \ge 2$, assume that for each $i \in [2(k-1)]$, there is no $\widehat{K}_m$ of color $i$ in $G_{2(k-1)}$. Now, consider the graph $G_{2k}$. By Claim~\ref{2k-12k}, there is no monochromatic $\widehat{K}_m$ of color $2k-1$ or $2k$ in $G_{2k}$.
  
  Next, we use proof by contradiction. Suppose there exists an $i \in [2k-2]$ such that there is a $\widehat{K}_m$ of color $i$ in $G_{2k}$. Since $G_{2k}$ is obtained by inflating $G(2k-1, 2k)$, the $\widehat{K}_m$ of color $i$ can only be contained in $G_{2k-2}$, $G_{2k-2}(2k-1)$, or $G_{2k-2}(2k)$. By the inductive hypothesis, the $\widehat{K}_m$ of color $i$ cannot be contained in $G_{2k-2}$. By Claim~\ref{2k-22k}, there is no $\widehat{K}_m$ of color $i$ in either $G_{2k-2}(2k-1)$ or $G_{2k-2}(2k)$. This contradiction completes the proof.
\end{proof}

According to Claim~\ref{finalclaim}, for even $k \ge 2$ and even $m \ge 6$, we have $GR_k(\widehat{K}_m)\ge |G_k|+1$. By combining this with Equations~(\ref{equation_2}) and (\ref{equation_3}), we obtain our lemma.    
\end{proof}

Next, we consider the case where $k$ is odd. We need to prove the following result.
\begin{lemma}
  For odd $k \ge 3$ and even $m \ge 6$, we have
    \begin{equation*}
      GR_k(\widehat{K}_m) \ge  
      \begin{cases}
      \max\{2R_2(G)-1, 5m+1\}, & \text{if } k=3;\\
      2(R_2(\widehat{K}_m)+5(m-2)(5^{\frac{k-2}{2}}-1)-(m-4)(k-2))-1, & \text{if } k\ge 5 \text{ is odd and } m \equiv 0\ (\bmod{\ 4});\\
      2(R_2(\widehat{K}_m)+5(m-1)(5^{\frac{k-2}{2}}-1)-(m-2)(k-2))-1, & \text{if } k\ge 5 \text{ is odd and } m \equiv 2\ (\bmod{\ 4}).
      \end{cases}
    \end{equation*}
\end{lemma}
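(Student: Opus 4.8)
The plan is to obtain the odd-color bounds by reducing to the even case via a single \emph{doubling} step, supplemented by one direct construction that is only needed to realize the competing $5m+1$ term when $k=3$. Write $G_{k-1}$ for the even-color extremal graph produced in the previous lemma: a Gallai $(k-1)$-coloring of a complete graph on $|G_{k-1}|$ vertices, using colors $[k-1]$, with no monochromatic $\widehat{K}_m$. For odd $k$ I would take two disjoint copies of $G_{k-1}$ and color every edge between the two copies with the fresh color $k$. This gives a complete graph on $2|G_{k-1}|$ vertices, and once it is shown to be a Gallai $k$-coloring free of monochromatic $\widehat{K}_m$, we conclude $GR_k(\widehat{K}_m)\ge 2|G_{k-1}|+1$.

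The verification splits into three short arguments. First, the coloring is Gallai: any triangle either lies inside one copy (Gallai by the inductive property of $G_{k-1}$) or has exactly two vertices in one copy, in which case its two cross-edges both receive color $k$, so it is not rainbow. Second, for $i\in[k-1]$ there is no color-$i$ kipas: the cross-edges all carry color $k$, so the color-$i$ graph is disconnected across the two copies, forcing any monochromatic $\widehat{K}_m$ in color $i$ to lie entirely within one copy, contradicting the defining property of $G_{k-1}$. Third, there is no color-$k$ kipas: the color-$k$ edges form a complete bipartite graph between the two copies, so if $\widehat{K}_m=K_1+P_m$ embedded in color $k$, all $m$ rim vertices—being color-$k$ neighbors of the center—would lie in the copy opposite the center, whence the $m-1$ path edges would all sit inside one side of the bipartition, where no color-$k$ edge exists; this is impossible for $m\ge 2$.

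For $k=3$, doubling the $2$-color extremal graph $G_2$ (on $R_2(\widehat{K}_m)-1$ vertices) yields the term $2R_2(\widehat{K}_m)-1$. To produce the competing $5m+1$, I would instead blow up $C_5$: partition $5m$ vertices into five cliques $V_1,\dots,V_5$ of size $m$, color all edges inside each $V_i$ with color $3$, color edges between consecutive parts with color $1$, and edges between parts at cyclic distance two with color $2$. This is Gallai, since every triangle meets at most two colors; color $3$ induces five disjoint $K_m$'s, each too small to contain $\widehat{K}_m$; and for colors $1$ and $2$ the color-neighborhood of any vertex is a union of two parts carrying no edge of that color, so no center can support even a single rim edge. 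Hence this graph contains no monochromatic $\widehat{K}_m$, giving $GR_3(\widehat{K}_m)\ge 5m+1$, and together with the doubling bound we obtain the maximum.

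It then remains to substitute the closed form of $|G_{k-1}|$ from the even-case lemma (taken with $k-1$ colors) and double; a routine calculation turns $2|G_{k-1}|+1$ into the stated expression in each residue class of $m$ modulo $4$. I expect the one genuinely delicate point to be the third verification above: confirming that the joining color $k$ cannot host its own kipas. Everything hinges on the observation that a kipas contains a spanning path of its rim, which cannot be accommodated inside a single side of a complete bipartite color class. The rest is the absence of rainbow triangles (immediate from the blow-up structure) and bookkeeping, both routine.
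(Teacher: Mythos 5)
Your proposal is correct and takes essentially the same route as the paper: the paper likewise obtains the odd-$k$ bounds by taking two copies of the even-case extremal graph $G_{k-1}$ and coloring all edges between them with the new color $k$ (giving $GR_k(\widehat{K}_m)\ge 2|G_{k-1}|+1$), and it realizes the $5m+1$ term for $k=3$ by replacing each vertex of $G(1,2)$ with a color-$3$ clique $K_m$, which is exactly your $C_5$ blow-up. Your three verification steps simply spell out what the paper dismisses as ``evident,'' so the only difference is the level of detail.
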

\begin{proof}
We first verify that $GR_3(\widehat{K}_m) \ge 5m + 1$. We replace each vertex of $G(1,2)$ in the previous lemma with the complete graph $K_m$ of color $3$. It is evident that this graph contains neither a monochromatic $\widehat{K}_m$ nor a rainbow triangle. Thus, we have $GR_3(\widehat{K}_m)\ge 5m+1$.

Since $k-1$ is odd, we can derive the graph $G_{k-1}$ based on Equations~(\ref{equation_2}) and (\ref{equation_3}). We construct two copies of this graph, with all edges between them colored with color $k$. Clearly, this graph contains neither a monochromatic $\widehat{K}_m$ nor a rainbow triangle. Therefore, we obtain the lower bound $GR_k(\widehat{K}_m) \ge 2|G_{k-1}| + 1$. This leads to all the other lower bounds.
\end{proof}

At the end of this section, we consider the case where $m$ is odd and $m \ge 7$, which corresponds to proving Theorem~\ref{thmoddm}.
\begin{proof}[Proof of Theorem~\ref{thmoddm}]
It is evident that $m-1$ is an even number of at least $6$. When $k$ is even, we can construct the extremal graph $G_k$ corresponding to $m-1$ as stated in Theorem~\ref{thmkipas}. A slight adjustment is required: we can replace the unique $G_2$ in $G_k$, which corresponds to the extremal graph of $R_2(\widehat{K}_{m-1})$, with the extremal graph of $R_2(\widehat{K}_m)$. 

When $k$ is odd, we still follow the construction from Theorem~\ref{thmkipas} to obtain the corresponding extremal graph for $m-1$. This graph includes two copies of the extremal graph for $R_2(\widehat{K}_{m-1})$, and we replace both with the extremal graph for $R_2(\widehat{K}_m)$. This completes the proof of the theorem.
\end{proof}

\section{Gallai-Ramsey non-full graphs}\label{secfull}
In this section, we discuss the conjecture of Su and Liu~\cite{Su2022}, which states that for any graph $G$ without isolated vertices, $G$ is Ramsey-full if and only if $G$ is Gallai-Ramsey-full. One easily verifiable observation is that if the latter proposition ($G$ is Gallai-Ramsey-full) holds, then it can be deduced from $GR_2(G)=R(G,G)$ that the former proposition ($G$ is Ramsey-full) must also hold. However, the condition of being Ramsey-full does not necessarily imply being Gallai-Ramsey-full. For instance, Erd\H{o}s and Faudree~\cite{Erdos1992} noted that $K_{1,n}$ is Ramsey-full for even $n$. Nonetheless, Su and Liu~\cite{Su2022} have provided a rigorous proof that $GR^*_k(K_{1,n})=2n-2$ for even $n\ge 12$. Combining this result with the fact that $GR_k(K_{1,n})=5n/2-3$ for $k\ge 3$ and even $n\ge 4$, it can be concluded that the conjecture made by Su and Liu is incorrect.

\begin{lemma}
  For every even integer $n$ with $n\ge 12$, $K_{1,n}$ is Ramsey-full but not Gallai-Ramsey-full.
\end{lemma}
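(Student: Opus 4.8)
The plan is to establish the two assertions separately, since they draw on different ingredients already recorded in this section.

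\textbf{Ramsey-fullness.} First I would recall the exact value $R(K_{1,n},K_{1,n})=2n-1$ for even $n$: avoiding a monochromatic $K_{1,n}$ in each color forces every vertex to carry at most $n-1$ incident edges of each color, hence degree at most $2n-2$, and on $K_{2n-1}$ the resulting $(n-1)$-regularity of each color class is impossible for even $n$ by a parity count (an $(n-1)$-regular graph on $2n-1$ vertices would have odd degree sum), which pins the Ramsey value at $2n-1$. The key observation is that deleting a single edge $e=uv$ destroys exactly this parity obstruction. I would therefore exhibit a red--blue coloring of $K_{2n-1}-e$ in which both color classes are $(n-1)$-regular at every vertex other than $u,v$, while at $u,v$ the red degrees are $n-1$ and $n-2$ (and symmetrically in blue); this makes each color class realizable and keeps every color-degree at most $n-1$, so there is no monochromatic $K_{1,n}$. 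Thus $K_{1,n}$ is Ramsey-full, which is the fact of Erd\H{o}s and Faudree quoted above.

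\textbf{Failure of Gallai-Ramsey-fullness.} For $k\ge 3$ I would use the recorded value $GR_k(K_{1,n})=5n/2-3$, so that being Gallai-Ramsey-full is, by definition, the assertion that $K_{5n/2-3}-e$ carries a Gallai $k$-coloring with no monochromatic $K_{1,n}$, i.e.\ with every color-degree at most $n-1$. I would show no such coloring exists. The quick packaging is the comparison indicated in the introduction: combining Su and Liu's edge-deleted parameter $GR^*_k(K_{1,n})=2n-2$ with $GR_k(K_{1,n})=5n/2-3$ and noting $5n/2-3-(2n-2)=n/2-1>0$ for every $n\ge 4$, so the two thresholds disagree, which is incompatible with Gallai-Ramsey-fullness. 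A self-contained route to the same conclusion is an extension argument: assume a star-free Gallai coloring of $K_{5n/2-3}-e$ exists and show that the missing edge $uv$ can be assigned a color (after at most a local recoloring near $u,v$) without creating a monochromatic $K_{1,n}$ or a rainbow triangle; this produces a star-free Gallai coloring of the complete graph $K_{5n/2-3}$, contradicting $GR_k(K_{1,n})=5n/2-3$. Either way $K_{1,n}$ is not Gallai-Ramsey-full, and together with the first part this proves the lemma; the hypothesis $n\ge 12$ enters only through the validity of the Su--Liu bound.

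The step I expect to be the main obstacle is the structural control behind the failure of fullness. Unlike the complete-graph case, a hypothetical good coloring of $K_{5n/2-3}-e$ would live on one more vertex than any star-free Gallai coloring of a complete graph can support, so the entire question is whether a single missing edge can buy that extra vertex. Ruling this out requires understanding \emph{all} star-free Gallai colorings of the near-complete graph, not merely the balanced blow-up of the two-colored $C_5$ (with part sizes clustering near $n/2$, as the degree cap $n-1$ allows) that realizes the extremal complete configuration. Invoking Gallai's theorem reduces a general coloring to a two-colored reduced graph on the parts, but one then has to track, through the recursion and precisely at the two endpoints of the deleted edge, the color-degree and rainbow-triangle constraints simultaneously, and conclude that some vertex is forced to carry $n$ edges of one color. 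Since this delicate degree-and-parity bookkeeping is exactly what Su and Liu carried out, I would quote their bound and devote the write-up to the explicit Ramsey-full coloring together with the short comparison $GR^*_k(K_{1,n})<GR_k(K_{1,n})$.
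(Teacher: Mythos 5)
Your proposal matches the paper's own argument: the paper likewise treats this lemma as a combination of known results, quoting Erd\H{o}s--Faudree for the Ramsey-fullness of $K_{1,n}$ (even $n$) and deducing the failure of Gallai-Ramsey-fullness by comparing Su--Liu's $GR^*_k(K_{1,n})=2n-2$ (even $n\ge 12$) with $GR_k(K_{1,n})=5n/2-3$ for $k\ge 3$. One small correction to your ``quick packaging'': Gallai-Ramsey-fullness is equivalent to $GR^*_k(K_{1,n})=GR_k(K_{1,n})-1$, not to equality of the two parameters, so the inequality you actually need is $2n-2<5n/2-4$ (i.e.\ $n>4$) rather than merely a positive difference --- harmless for $n\ge 12$, but ``the thresholds disagree'' is not by itself incompatible with fullness, since they always differ by at least one.
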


The star with an even number of edges is not an isolated counterexample. We can also provide the following counterexample. We use the notation $K^+_{1,n}$ to represent the graph obtained by subdividing one edge of $K_{1,n}$. Hence, $K^+_{1,n}$ is a tree with $n+2$ vertices and maximum degree $n$. Based on the following two lemmas, it can be concluded that $K^+_{1,n}$ is Ramsey-full but not Gallai-Ramsey-full.

\begin{lemma}\label{K1n+}
  For every even integer $n$ with $n\ge 4$, $K^+_{1,n}$ is Ramsey-full.
\end{lemma}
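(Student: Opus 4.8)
The plan is to reduce everything to a single Ramsey-number computation and then recycle the coloring that already witnesses the Ramsey-fullness of the star. Concretely, the key claim is that $R(K^+_{1,n},K^+_{1,n})=2n-1$ for even $n$, exactly the value of $R(K_{1,n},K_{1,n})$. The lower bound is free: since $K^+_{1,n}\supseteq K_{1,n}$, any coloring with no monochromatic $K_{1,n}$ also has no monochromatic $K^+_{1,n}$, so $R(K^+_{1,n},K^+_{1,n})\ge R(K_{1,n},K_{1,n})=2n-1$. Granting the matching upper bound, Ramsey-fullness is then immediate: a red--blue coloring of $K_{2n-1}-e$ in which \emph{both} color classes have maximum degree at most $n-1$ exists because that is precisely what makes $K_{1,n}$ Ramsey-full (as noted by Erd\H{o}s and Faudree), and such a coloring contains no monochromatic $K_{1,n}$, hence no monochromatic $K^+_{1,n}$. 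So the entire lemma reduces to proving the one inequality $R(K^+_{1,n},K^+_{1,n})\le 2n-1$.

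To prove this upper bound I would take an arbitrary red--blue coloring of $K_{2n-1}$ and assume for contradiction that no monochromatic $K^+_{1,n}$ occurs. I would first record two embedding facts, each verified by placing the star at $c$, the subdivided branch at a neighbor $w$, and the pendant leaf beyond $w$: (a) if a vertex $c$ has same-color degree $\ge n+1$ and some same-color neighbor $w$ has degree $\ge 2$, then a monochromatic $K^+_{1,n}$ appears; (b) if $\deg(c)=n$ and some same-color neighbor $w$ sends a same-color edge outside $N[c]$, then again a monochromatic $K^+_{1,n}$ appears. The first step is the maximum-degree reduction. If some $c$ had red-degree $\ge n+1$, then either a red-neighbor carries a second red edge (fact (a) gives a red $K^+_{1,n}$ directly), or every red-neighbor is a red-leaf; in the latter case each such leaf has blue-degree $2n-3\ge n+1$ and a blue neighbor of degree $\ge 2$ (namely any blue-neighbor of $c$), so fact (a) forces a blue $K^+_{1,n}$, the sole exception being $\deg_{\mathrm{red}}(c)=2n-2$, where the leaves span a blue $K_{2n-2}\supseteq K^+_{1,n}$ once $n\ge 4$. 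Hence $\Delta_{\mathrm{red}},\Delta_{\mathrm{blue}}\le n$.

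With both maximum degrees at most $n$ on $2n-1$ vertices, every red-degree lies in $\{n-2,n-1,n\}$, and a parity count forbids all of them from equalling $n-1$; this is where evenness of $n$ enters, since $(2n-1)(n-1)$ is then odd. After possibly swapping the two colors, some vertex $c$ thus has red-degree exactly $n$. Avoiding a red $K^+_{1,n}$ centered at $c$ forces, by fact (b), every red-neighbor $w$ of $c$ to satisfy $N_{\mathrm{red}}(w)\subseteq N_{\mathrm{red}}[c]=:S$, so that no red edge leaves the $(n+1)$-set $S$. But then each of the $n-2$ vertices outside $S$ is joined to all of $S$ in blue, so it has blue-degree at least $n+1>n$, contradicting $\Delta_{\mathrm{blue}}\le n$. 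This contradiction gives $R(K^+_{1,n},K^+_{1,n})\le 2n-1$, and combined with the first paragraph it completes the proof.

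I expect the main obstacle to be the maximum-degree reduction of the second paragraph rather than the final trapping step or the reuse of the Erd\H{o}s--Faudree coloring. The delicate point is arranging the case split so that a red star of large degree cleanly forces a blue $K^+_{1,n}$ on its leaves, and this is exactly where the hypothesis $n\ge 4$ is consumed (through $2n-2\ge n+2$ and $2n-3\ge n+1$), while evenness is consumed later in the parity count. Accordingly I would spend most of the write-up making the degree reduction airtight; the trapping argument and the transfer of the star coloring are then short and routine.
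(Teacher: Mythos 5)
Your proposal is correct, and its core --- the arrowing statement $K_{2n-1}\to(K^+_{1,n},K^+_{1,n})$ --- is proved by a genuinely different route from the paper's. The lower-bound halves coincide: both you and the paper reuse the Erd\H{o}s--Faudree coloring of $K_{2n-1}-e$ in which each color class has maximum degree at most $n-1$, so there is no monochromatic $K_{1,n}$ and hence no $K^+_{1,n}$. For the arrowing, the paper invokes $K_{2n-1}\to(K_{1,n},K_{1,n})$ as a black box: given the guaranteed red star with leaf set $X$ and remaining vertex set $Y$ (so $|Y|=n-2$), either some red $X$--$Y$ edge extends the star to a red $K^+_{1,n}$, or all $X$--$Y$ edges are blue and the blue $K_{n,n-2}$ already contains $K^+_{1,n}$; this last containment is where the paper spends $n\ge 4$. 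You instead prove the arrowing from scratch: your fact (a) forces $\Delta_{\mathrm{red}},\Delta_{\mathrm{blue}}\le n$; the handshake/parity count then produces, after a possible color swap, a vertex $c$ with $\deg_{\mathrm{red}}(c)=n$ --- this is exactly where evenness of $n$ enters, the same parity that sits inside the proof of $R(K_{1,n},K_{1,n})=2n-1$ which the paper merely quotes; and your fact (b) traps all red edges inside $S=N_{\mathrm{red}}[c]$, so each of the $n-2$ vertices outside $S$ has blue degree at least $n+1$, contradicting the degree bound from your first step. Both endgames are sound (I checked facts (a), (b), the $\deg_{\mathrm{red}}(c)=2n-2$ exception via the blue $K_{2n-2}$, and the final trapping step); the paper's argument is shorter because it delegates the degree and parity work to the known star Ramsey number, while yours is self-contained, in effect re-proving that star result along the way, and has the merit of making explicit precisely where the hypotheses $n$ even and $n\ge 4$ are consumed.
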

\begin{proof}
 Assume $n$ is an even number with $n \ge 4$. From $K_{2n-1}-e \not\to (K_{1,n},K_{1,n})$, we can deduce that $K_{2n-1}-e \not\to (K^+_{1,n},K^+_{1,n})$. Therefore, we only need to prove that $K_{2n-1} \to (K^+_{1,n},K^+_{1,n})$. Since $K_{2n-1} \to (K_{1,n},K_{1,n})$, for any 2-edge-coloring of $K_{2n-1}$, there must exist a monochromatic $K_{1,n}$, which we may assume is red. Denote the set of $n$ non-center vertices of $K_{1,n}$ as $X$, and the set of vertices not in $K_{1,n}$ as $Y$. If there exists a red edge between $X$ and $Y$, then this edge, combined with the red $K_{1,n}$, forms a red $K^+_{1,n}$. If all edges between $X$ and $Y$ are blue, then there exists a blue $K_{n,n-2}$, which contains $K^+_{1,n}$ as a subgraph. This completes the proof.
\end{proof}

\begin{lemma}\label{main5}
For any $k\ge 3$ and every even integer $n$ with $n\ge 12$, \[GR_k(K^+_{1,n})=5n/2+k-6 \text{ and } GR^*_k(K^+_{1,n})=2n+k-5\,.\] Therefore, $K^+_{1,n}$ is not Gallai-Ramsey-full.
\end{lemma}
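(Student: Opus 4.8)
The plan is to establish the two exact values separately and then read off the non-full conclusion by comparing them. Throughout I will use the structural dichotomy for a single color class $F$: since $K^+_{1,n}$ is a star $K_{1,n}$ with one pendant edge attached to a leaf, avoiding $K^+_{1,n}$ forces every vertex $v$ with $\deg_F(v)\ge n+1$ to be the centre of a pure star component (independent neighbourhood, no edge leaving $N_F(v)$), while a vertex of $F$-degree exactly $n$ may only lie in a component confined to its closed neighbourhood. The two useful extremes are: a monochromatic star $K_{1,t}$ never contains $K^+_{1,n}$ no matter how large $t$ is, whereas a monochromatic complete bipartite graph $K_{s,t}$ with $s\ge 2$ and $t\ge n$ always does.

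For the lower bounds I will give explicit Gallai colorings. For $GR_k$ I start from the extremal coloring witnessing $GR_3(K_{1,n})=5n/2-3$: a Gallai $3$-coloring of $K_{5n/2-4}$ in which every color has maximum degree at most $n-1$, so no color even contains $K^+_{1,n}$. I then append vertices $u_4,\dots,u_k$ one at a time, coloring all edges from $u_j$ to the current graph with the fresh color $j$. Each new color class is a star centred at $u_j$, hence $K^+_{1,n}$-free; the old colors $1,2,3$ keep maximum degree $\le n-1$; and every triangle meeting the $u_j$'s repeats a color, so no rainbow triangle appears. This colors $K_{5n/2+k-7}$ with no monochromatic $K^+_{1,n}$, giving $GR_k(K^+_{1,n})\ge 5n/2+k-6$. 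For $GR^*_k$ I extend this extremal graph by a single additional apex $w$ joined to only $2n+k-6$ of its vertices, coloring the edges at $w$ so as to create neither a rainbow triangle nor a monochromatic $K^+_{1,n}$; the $n/2-1$ omitted vertices provide exactly the slack needed for such a coloring, yielding $GR^*_k(K^+_{1,n})\ge 2n+k-5$.

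For the upper bounds I will induct on $k\ge 3$, using Theorem~\ref{Gallai} together with the known values $GR_k(K_{1,n})=5n/2-3$ and $R(K^+_{1,n},K^+_{1,n})=2n-1$ (the latter from the proof of Lemma~\ref{K1n+}). In the base case $k=3$ one first extracts a monochromatic $K_{1,n}$ (forced since $5n/2-3=GR_3(K_{1,n})$) and then analyses the other-color degrees of its leaves to confine the configuration. For the inductive step I take a Gallai partition $V_1,\dots,V_p$ with reduced coloring in two colors $a,b$. If $p=2$ with a part of size $\ge 2$, then either some part has $\ge n$ vertices, forcing a monochromatic $K_{s,t}\supseteq K^+_{1,n}$, or both parts are small and the order is at most $2n-2$. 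If $p=2$ with a singleton part $\{u\}$, the apex $u$ has $a$-degree $N-1\ge n+1$, so any $a$-edge inside the other part would create a pendant and a monochromatic $K^+_{1,n}$; hence color $a$ lives only at $u$, and deleting $u$ leaves a Gallai $(k-1)$-coloring of $K_{N-1}$ with $N-1=5n/2+k-7=GR_{k-1}(K^+_{1,n})$, which contains $K^+_{1,n}$ by induction. This ``remove an apex, drop a color'' step is precisely what produces the $+1$-per-color increment. The main obstacle is the remaining case $p\ge 3$: here high-degree vertices need not be pure-star centres, since a monochromatic $K_{n+1}$ is $K^+_{1,n}$-free yet has all degrees equal to $n$, so the naive counting breaks. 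One must instead combine the two-color bound on the reduced graph ($p\le 2n-2$) with a size accounting that charges each apex-type part to a distinct color; pushing this to the exact bound $5n/2+k-6$, together with the analogous degree-$(2n+k-5)$ forcing argument for $GR^*_k$, is the technical heart of the proof.

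Finally, the non-full conclusion is immediate from the two values. Writing $N=GR_k(K^+_{1,n})$ and $K_N-e=K_{N-1}\sqcup K_{1,N-2}$, the graph $K^+_{1,n}$ is Gallai-Ramsey-full exactly when $GR^*_k(K^+_{1,n})=N-1=GR_k(K^+_{1,n})-1$. Since $GR^*_k(K^+_{1,n})=2n+k-5$ while $GR_k(K^+_{1,n})-1=5n/2+k-7$, and $5n/2+k-7-(2n+k-5)=n/2-2\ge 4>0$ for even $n\ge 12$, the two quantities differ; hence $K^+_{1,n}$ is not Gallai-Ramsey-full. Combined with Lemma~\ref{K1n+}, this furnishes the asserted counterexample.
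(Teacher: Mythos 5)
Your lower-bound constructions coincide with the paper's (the two-colored pentagon blow-up with parts of sizes $n/2$ and $n/2-1$, third color inside the parts, $k-3$ apex vertices in fresh colors, and a partial apex $w$ of degree $2n+k-6$ — though you leave the coloring at $w$ unspecified, the paper's choice being red to $V_1\cup V_3$, blue to $V_2\cup V_5$, color $i+3$ to $v_i$), and your $p=2$ Gallai-partition cases in the upper bound are sound. But the proof has a genuine gap exactly where you concede one: the case of a Gallai partition with at least four parts. This is not a corner case to be polished off later; it is the main case, since it is the configuration realized by the extremal coloring $H$ itself, and it is needed already in your base case $k=3$. The paper resolves it by proving a structural \emph{uniqueness} theorem: after stripping at most $k-3$ singleton apexes (and showing that stripping $k-2$ of them leaves a two-colored complete graph on $5n/2-5\ge R(K^+_{1,n},K^+_{1,n})=2n-1$ vertices, a contradiction), the remaining graph $G'$ has a minimal Gallai partition with no singleton parts, every part of size at least $n/2-2$, exactly five parts (four parts and at least six parts are ruled out separately), and reduced graph equal to the two-colored pentagon; then, since any two parts have a common neighbour in some colour, any two part-sizes sum to at most $n-1$, which pins the sizes to exactly $(n/2,n/2-1,n/2-1,n/2-1,n/2-1)$ and forces $t=k-3$. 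Your ``size accounting that charges each apex-type part to a distinct color'' gestures at this but contains no replacement for any of these steps.

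The second, equally serious gap is the upper bound $GR^*_k(K^+_{1,n})\le 2n+k-5$, without which the non-fullness conclusion collapses: ``not Gallai-Ramsey-full'' means every Gallai coloring of $K_N-e$ contains a monochromatic copy, i.e.\ it is an \emph{upper} bound $GR^*_k\le GR_k-2$ that must be proved, not just the lower bound $2n+k-5$. Your induction, even if completed, would only show that every Gallai $k$-coloring of $K_{5n/2+k-6}$ contains a monochromatic $K^+_{1,n}$; it says nothing about a host graph in which one vertex has degree only $2n+k-5$, and an inductive ``remove an apex, drop a color'' step does not obviously preserve such a degree-deficient configuration. This is precisely why the paper proves uniqueness of the extremal coloring rather than mere nonexistence of better ones: knowing that the coloring on the other $5n/2+k-7$ vertices must equal $H$, it can argue that the deficient vertex $w$ sends no edges of color $\ge 3$ into $G'$, no red edges to $V_2\cup V_5$ and no blue edges to $V_3\cup V_4$, and then invoke Su and Liu's result $GR^*_k(K_{1,n})=2n-2$ to produce a monochromatic star that the pentagon structure extends to a $K^+_{1,n}$. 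You would need this structure theorem (or a substitute argument for degree-deficient hosts) before the claimed equality $GR^*_k(K^+_{1,n})=2n+k-5$, and hence the counterexample to Conjecture~\ref{Suconjecture}, is established.
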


\begin{proof}
In the following proof, we use red, blue, and green to refer to the colors $1$, $2$, and $3$, respectively.

We denote a colored $K_5$ by $G(1,2)$, where the edges are colored with red and blue. Moreover, the red edges and the blue edges each induce a monochromatic pentagon. Subsequently, we blow up one of the vertices to form a complete graph with $n/2$ vertices, while the remaining four vertices are blown up to form complete graphs with $n/2-1$ vertices each. Note that we have not specified the colors of edges in these five complete subgraphs.

Next, we introduce a new vertex $v_1$ and connect $v_1$ to every existing vertex with an edge of color $4$. We then proceed to add another vertex $v_2$ and connect $v_2$ to every existing vertex with an edge of color $5$. This process continues in the same manner until we have added $k-3$ vertices, namely $v_1, \ldots, v_{k-3}$. We denote the resulting graph as $H$. As shown in Figure~\ref{fig:Ramseyfull}, we denote the five complete subgraphs obtained after the blown-up operation as $V_1, \ldots, V_5$, where $|V_1|=n/2$ and $|V_i|=n/2-1$ for $i\in [2,5]$.

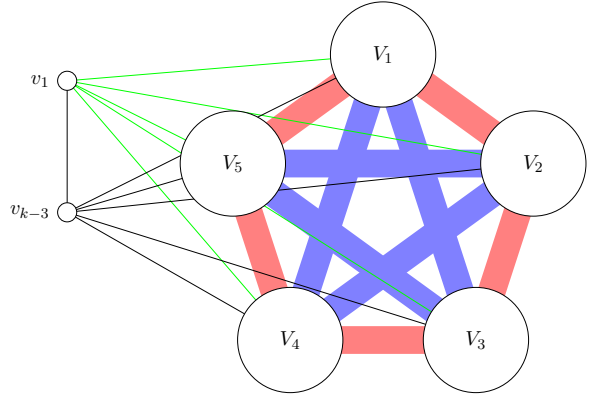
\begin{figure}[H]
  \centering
  \scalebox{0.7}{
  \begin{tikzpicture}

    \def\radius{2cm}

    \def\pentagonRadius{3cm}

    \def\lineWidth{15pt}

    \coordinate (A) at ({\pentagonRadius*cos(90)}, {\pentagonRadius*sin(90)});
    \coordinate (B) at ({\pentagonRadius*cos(162)}, {\pentagonRadius*sin(162)});
    \coordinate (C) at ({\pentagonRadius*cos(234)}, {\pentagonRadius*sin(234)});
    \coordinate (D) at ({\pentagonRadius*cos(306)}, {\pentagonRadius*sin(306)});
    \coordinate (E) at ({\pentagonRadius*cos(18)}, {\pentagonRadius*sin(18)});

    \coordinate (v1) at (-6, 2.5);
    \coordinate (vk) at (-6, 0);

    \draw[red!50, line width=\lineWidth] (A) -- (B);
    \draw[red!50, line width=\lineWidth] (B) -- (C);
    \draw[red!50, line width=\lineWidth] (C) -- (D);
    \draw[red!50, line width=\lineWidth] (D) -- (E);
    \draw[red!50, line width=\lineWidth] (E) -- (A);

    \draw[blue!50, line width=\lineWidth] (A) -- (C);
    \draw[blue!50, line width=\lineWidth] (C) -- (E);
    \draw[blue!50, line width=\lineWidth] (E) -- (B);
    \draw[blue!50, line width=\lineWidth] (B) -- (D);
    \draw[blue!50, line width=\lineWidth] (D) -- (A);

    \draw[green] (v1) -- (A);
    \draw[green] (v1) -- (B);
    \draw[green] (v1) -- (C);
    \draw[green] (v1) -- (D);
    \draw[green] (v1) -- (E);
    
    \draw[black] (vk) -- (v1);
    \draw[black] (vk) -- (A);
    \draw[black] (vk) -- (B);
    \draw[black] (vk) -- (C);
    \draw[black] (vk) -- (D);
    \draw[black] (vk) -- (E);

    \foreach \i in {A,B,C,D,E}{
        \node[draw, fill=white, circle, minimum size=\radius] at (\i) {};
    }

    \node[draw, fill=white, circle, minimum size=3pt] at (v1) {};
    \node[draw, fill=white, circle, minimum size=3pt] at (vk) {};

    \node at (A) {$V_1$};
    \node at (B) {$V_5$};
    \node at (C) {$V_4$};
    \node at (D) {$V_3$};
    \node at (E) {$V_2$};
    
    \node[left=2mm] at (v1) {$v_1$};
    \node[left=2mm] at (vk) {$v_{k-3}$};
  \end{tikzpicture}}
  \caption{The graph $H$, where the edges in $V_i$ are uncolored for each $i\in [5]$.}
  \label{fig:Ramseyfull}
\end{figure}

\begin{claim}
  $GR_k(K^+_{1,n}) \ge \frac{5n}{2}+k-6$ and $GR^*_k(K^+_{1,n}) \ge 2n+k-5$.
\end{claim}
\begin{proof}
We color the edges within each $G[V_i]$ with color $3$ for each $i \in [5]$. In this way, the coloring of the complete graph $H$ is completed. It can be observed that $H$ is a Gallai coloring, and $H$ does not contain a monochromatic $K_{1,n}$ in color $i$ for each $i \in [3]$. The edges of color $j$ induce a star for each $j \in [4,k]$. Therefore, $H$ does not contain a monochromatic $K^+_{1,n}$. Hence, $GR_k(K^+_{1,n}) \ge \frac{5n}{2}+k-6$.

Next, we add a vertex $w$. Connect $w$ to each $v_i$ for $i \in [k-3]$, and color the edge $wv_i$ with color $i+3$. Connect $w$ to every vertex in $V_1 \cup V_3$, and color these edges red. Connect $w$ to every vertex in $V_2 \cup V_5$, and color these edges blue. At this point, $w$ is connected by $2n+k-6$ edges. In the new graph, it still does not contain a monochromatic $K_{1,n}$ in color $i$ for each $i \in [3]$, and the edges of color $j$ induce a star for each $j \in [4,k]$. Therefore, the new graph still does not contain a monochromatic $K^+_{1,n}$. Thus, $GR^*_k(K^+_{1,n}) \ge 2n+k-5$.
\end{proof}

To prove the upper bound, we need to establish the following claim. Note that the edges within each $G[V_i]$ in the graph $H$ are uncolored.

\begin{claim}
For any Gallai $k$-edge coloring of a complete graph with $5n/2+k-7$ vertices, if it does not contain a monochromatic $K^+_{1,n}$, then, after appropriately rearranging the colors, its coloring is the same as that of the graph $H$.
\end{claim}

\begin{proof}
Let $(G, \tau)$ be a Gallai $k$-edge coloring of a complete graph on $5n/2+k-7$ vertices. Assume that $G$ does not contain any monochromatic $K^+_{1,n}$. The objective is to demonstrate that, by appropriately arranging the colors, the coloring of $G$ will coincide with that of the graph $H$.

Consider a Gallai partition of the graph $G$ such that the number of parts is as small as possible. If there are only two parts and the smaller part contains only one vertex, denoted by $v_1$, we then consider the graph $G-v_1$. Next, consider a Gallai partition of $G-v_1$ such that the number of parts is minimized. Again, if there are only two parts and the smaller part contains only one vertex, denoted by $v_2$, we proceed by considering the graph $G-\{v_1, v_2\}$. This process continues until we have performed this operation $k-2$ times, or until it can no longer be done. Suppose this process is carried out $t$ times in total, where $t \le k-2$.

If $t=k-2$, then $|G-\{v_1, \ldots, v_{k-2}\}|=5n/2-5 \ge n + 1$ for $n \ge 4$. We now examine the colors of the edges between $v_1, \ldots, v_{k-2}$ and $G - \{v_1, \ldots, v_{k-2}\}$. According to the process above, for each $i \in [k-2]$, all edges from $v_i$ to $G - \{v_1, \ldots, v_{k-2}\}$ are of the same color. If there exist $i, j$ with $1 \le i < j \le k-2$ such that the color of edges from $v_i$ to $G - \{v_1, \ldots, v_{k-2}\}$ is the same as that from $v_j$ to $G - \{v_1, \ldots, v_{k-2}\}$, then by selecting $n$ vertices from $G - \{v_1, \ldots, v_{k-2}\}$ and combining them with $v_i$ and $v_j$, we would obtain a monochromatic $K^+_{1,n}$.

If, for any $i, j$ with $1 \le i < j \le k-2$, the colors of edges from $v_i$ to $G-\{v_1, \ldots, v_{k-2}\}$ are different, then we have used $k-2$ distinct colors. Without loss of generality, assume these $k-2$ colors are $1, 2, \ldots, k-2$. Consequently, none of the edges in $G - \{v_1, \ldots, v_{k-2}\}$ can use these $k-2$ colors. Otherwise, suppose there exists an edge $u_1u_2 \in E(G - \{v_1, \ldots, v_{k-2}\})$ colored with color 1. In this case, $v_1u_1u_2$ forms a monochromatic $P_3$ of color 1. By selecting $n-1$ vertices from $G - (\{v_1, \ldots, v_{k-2}\} \cup \{u_1, u_2\})$ and combining them with $v_1$, we obtain a monochromatic $K_{1,n-1}$ of color 1, which, together with $u_1$ and $u_2$, forms a $K^+_{1,n}$, leading to a contradiction.

The graph $G - \{v_1, \ldots, v_{k-2}\}$ is a complete graph on $5n/2-5$ vertices, but all edges can only be colored with two colors. According to $R(K^+_{1,n}, K^+_{1,n}) = 2n - 1$ for $n \ge 4$, and since $5n/2-5 \ge 2n - 1$ for $n \ge 8$, we can find a monochromatic $K^+_{1,n}$ in $G - \{v_1, \ldots, v_{k-2}\}$, leading to a contradiction.

If $t \le k - 3$, we denote the graph $G - \{v_1, \ldots, v_t\}$ as $G'$. Consider a Gallai partition of the graph $G'$ that minimizes the number of parts. The edges between the parts can have at most two colors, which we may assume to be red and blue. If there are only two parts, then according to the construction of $G'$, the smaller part must contain at least two vertices, while the larger part contains at least $5n/4 - 2$ vertices. For $n \ge 5$, we have $\left\lceil 5n/4 \right\rceil - 2 \ge n$. We can select $n$ vertices from the larger part and 2 vertices from the smaller part to form a monochromatic $K_{2,n}$, which contains $K^+_{1,n}$ as a subgraph. 

If the Gallai partition consists of exactly three parts, note that the edges between any two parts have only one color, which allows the graph $G'$ to be divided into two parts. This contradicts the minimality of the number of parts. Therefore, we assume that the Gallai partition has at least four parts. Suppose one of the parts contains only a single vertex, denoted as $v$. The edges connecting vertex $v$ to the other vertices in $G'$ can only be of two colors: red and blue. Without loss of generality, assume the number of red edges connected to $v$ is at least as large as the number of blue edges. Consequently, all the red edges associated with $v$ induce a red star with at least $\left\lceil 5n/4 - 5/2 \right\rceil$ edges, which is at least $n$ for $n \ge 7$. 

Let $S$ be the set of vertices in $G'$ excluding those that are the vertices of this red star. To avoid the existence of a red $K^+_{1,n}$, any vertex $u$ in $S$ must connect with all vertices in $V(G') \setminus (S \cup \{v\})$ using blue edges. If $|S| = 1$, let $S = \{u\}$. Clearly, the edge $uv$ is blue. Thus, $G'$ has a Gallai partition with only two parts: $S$ and $V(G') \setminus S$, leading to a contradiction. If $|S| \ge 2$, we can find a blue $K_{2,n}$, which also contains $K^+_{1,n}$ as a subgraph, resulting in a contradiction. Therefore, a Gallai partition of $G'$ must consist of at least four parts, each containing at least two vertices.

Since $t \le k - 3$, we have $|G'| \ge 5n/2 - 4$. We claim that $|V_i| \ge n/2 - 2$ for each part $V_i$. This is because if there exists a part, say $V_1$, with at most $n/2 - 3$ vertices, then the remaining parts must contain at least $2n - 1$ vertices. By the pigeonhole principle, each vertex in $V_1$ must be connected to either at least $n$ red edges or at least $n$ blue edges. Since this is a Gallai coloring, the edge connection pattern of each vertex in $V_1$ to the outside must be the same. Moreover, because $|V_1| \ge 2$, we can easily derive a monochromatic $K^+_{1,n}$, leading to a contradiction. This proves that $|V_i| \ge n/2 - 2$.

If $G'$ has exactly four parts, let us denote them as $V_1, V_2, V_3, V_4$, with $|V_1| \ge |V_2| \ge |V_3| \ge |V_4|$. Without loss of generality, assume that the edges in $E(V_4, V_1)$ are colored red. Then, the edges in $E(V_4, V_3)$ must be colored blue. If not, since $|V_1| + |V_3| \ge |G'|/2 \ge n$, there would be a red $K_{2,n}$ in $E(V_4, V_1 \cup V_3)$, which contains $K^+_{1,n}$ as a subgraph, leading to a contradiction. 

Similarly, we can prove that the edges in $E(V_4, V_2)$ must also be colored blue. If $|V_2| + |V_3| \ge n$, then there would be a blue $K_{2,n}$ in $E(V_4, V_2 \cup V_3)$, resulting in another contradiction. Therefore, we set $|V_2| + |V_3| \le n - 1$. This implies that $|V_1| + |V_4| \ge n$. 

To avoid having the edges between $V_2$ and $V_1 \cup V_4$ containing a blue $K_{2,n}$ as a subgraph, the edges between $V_1$ and $V_2$ must be red. Likewise, all edges between $V_1$ and $V_3$ must also be red. Consequently, all edges between $V_1$ and $V_2 \cup V_3 \cup V_4$ are red, allowing $G'$ to be divided into two parts. This contradicts the assumption that $G'$ has at least four parts. Therefore, $G'$ must have at least five parts.

If $G'$ has at least six parts, by the pigeonhole principle, there must be at least three parts whose edges with $V_1$ are of the same color. Without loss of generality, assume that the edges between $V_1$ and $V_2 \cup V_3 \cup V_4$ are all red. Since $|V_i| \ge n/2 - 2$, it follows that $|V_2 \cup V_3 \cup V_4| \ge 3n/2 - 6 \ge n$. Therefore, the edges between $V_1$ and $V_2 \cup V_3 \cup V_4$ contain a red $K_{2,n}$ as a subgraph, which leads to a contradiction. This indicates that $G'$ must have exactly five parts.

To avoid the occurrence of a monochromatic $K_{2,n}$, the reduced graph of $G'$ must consist of two monochromatic cycles of five vertices. Assume that the largest part has $n/2 + k$ vertices, which implies $k \ge 0$; otherwise, there would be a contradiction with the total number of vertices in $G'$, which is at least $5n/2 - 4$. To prevent the emergence of a monochromatic $K_{2,n}$, the sum of the number of vertices in any two parts cannot exceed $n - 1$. Thus, each part other than $V_1$ must have at most $n/2 - k - 1$ vertices. Consequently, the total number of vertices is at most $5n/2 - 3k - 4$. From the inequality $5n/2 - 4 \le 5n/2 - 3k - 4$, it follows that $k = 0$. Therefore, the number of vertices in $G'$ is exactly $5n/2 - 4$. This leads to $t = k - 3$, and one of the five parts has $n/2$ vertices, while the other four parts each have $n/2 - 1$ vertices. It is now straightforward to verify that, with an appropriate arrangement of colors, the coloring of graph $G$ is the same as that of graph $H$.

We denote the five parts of $G'$ as $V_1, V_2, V_3, V_4, V_5$. Let us set $|V_1| = n/2$ and $|V_i| = n/2 - 1$ for $i \in [2, 5]$. For $i \in [5]$, we can assume that the edges between $V_i$ and $V_{i+1}$ are red edges (that is, edges of color $1$), while the edges between $V_i$ and $V_{i+2}$ are blue edges (that is, edges of color $2$). Here, the indices are taken modulo five. We denote the vertices in $V(G) \setminus V(G')$ as $v_1, v_2, \ldots, v_{k-3}$, and we assume that all edges between $v_i$ and $G'$ are of color $i + 3$. In this way, the coloring of graph $G$ is completely consistent with that of graph $H$.
\end{proof}

Now consider the last vertex $w$. Since $w$ is connected to $2n + k - 5$ edges, there are at least $2n - 2$ edges between $w$ and $G'$. For the edges between $w$ and $G'$, we have the following two claims.

\begin{claim}
  There are no red edges in $E(w,V_2\cup V_5)$ and no blue edges in $E(w,V_3\cup V_4)$.
\end{claim}
\begin{proof}
  We only need to prove the former, as the latter can be proven similarly. Let us assume that $wv_2$ is a red edge, where $v_2\in V_2$. Since there are $n-1$ red edges between $v_2$ and $V_1\cup V_3$, there exists a red $K_{1,n}$ centered at $v_2$. Now, let us choose a vertex $v_2'$ from $V_2\setminus \{v_2\}$. Notice that all edges between $v_2'$ and $V_1\cup V_3$ are also red. Therefore, it is easy to find a red $K^+_{1,n}$, leading to a contradiction.
\end{proof}

\begin{claim}
  All edges between the vertex $w$ and $G'$ are either red or blue.
\end{claim}
\begin{proof}
First, suppose there is an edge of color $i$ between $w$ and $G'$ for some $i \ge 4$. Since all edges between $v_{i-3}$ and $G'$ are colored $i$, and $|G'| \ge n$, the graph induced by $w$, $v_{i-3}$, and $V(G')$ contains a monochromatic $K^+_{1,n}$ in color $i$, leading to a contradiction.

Next, suppose there is an edge of color 3 between $w$ and $G'$. By symmetry, we consider two subcases: the color 3 edge appears in $E(w, V_1)$ or in $E(w, V_4)$. According to the previous claim, there are no red edges in $E(w, V_2 \cup V_5)$ and no blue edges in $E(w, V_3 \cup V_4)$. 

For the first subcase, in order to avoid a rainbow triangle, all edges in $E(w, V_2 \cup V_5)$ must be colored 3, and all edges in $E(w, V_3 \cup V_4)$ must also be colored 3. Thus, if any $G[V_i]$ contains an edge of color 3 for some $i \in [2,5]$, we will obtain a monochromatic $K^+_{1,n}$ in color 3. Therefore, all edges in $G[V_i]$ can only be red or blue for each $i \in [2,5]$. Since $n \ge 12$, we have $|V_2| =n/2-1 \ge 5$. By $R(P_4, P_4) = 5$, there must be either a red $P_4$ or a blue $P_4$ in $G[V_2]$. In either case, we can find a monochromatic $K^+_{1,n}$ in $G'$. 

Now consider the second subcase where the color 3 edge appears in $E(w, V_4)$. To avoid a rainbow triangle, all edges in $E(w, V_5)$ must be colored 3; next, all edges in $E(w, V_3)$ must also be colored 3; similarly, all edges in $E(w, V_2)$ and $E(w, V_4)$ must be colored 3, and finally, all edges in $E(w, V_1)$ must be colored 3 as well. This reduces the second subcase to the first one, completing the proof. 

Thus, all edges between $w$ and $G'$ must be either red or blue.
\end{proof}

Based on the two claims above, the edges in $E(w, V_2\cup V_5)$ can only be blue, and the edges in $E(w, V_3\cup V_4)$ can only be red. According to the results by Su and Liu~\cite{Su2022}, there must exist a monochromatic $K_{1,n}$. In the sequel, we will look for a monochromatic $K^+_{1,n}$. We divide our discussion into two cases based on the location of the center of $K_{1,n}$. The first case is when there exists a monochromatic $K_{1,n}$ with its center in $G'$. Without loss of generality, let us assume this is a red $K_{1,n}$ with the center vertex denoted as $u$ and the set of leaf vertices denoted as $U$. Based on the structure of the graph $H$, it is easy to select a vertex, denoted as $u'$, in $V(G')\setminus (\{u\}\cup U)$ such that there is at least one red edge between $u'$ and $U$. Thus, the graph induced by $U, u,$ and $u'$ contains a red subgraph $K^+_{1,n}$. The second case is when there exists a monochromatic $K_{1,n}$ with $w$ as its center. Without loss of generality, let us assume this star is red, and let $W$ denote the set of its $n$ leaf vertices. Since $|V_1|=n/2$, $|V_3|=|V_4|=n/2-1$, and $|W|=n$, it follows that $W\cap V_i\neq\emptyset$ for each $i\in \{1,3,4\}$. Based on the structure of the graph $H$, it is feasible to select a vertex, denoted as $w'$, from $V(G')\setminus W$, ensuring the existence of at least one red edge connecting $w'$ with $W$. Thus, the graph induced by $W, w,$ and $w'$ contains a red subgraph $K^+_{1,n}$. For readers who require further elucidation, the proof of Theorem 4 in Su and Liu~\cite{Su2022} can provide additional insights.
\end{proof}

Combining the three lemmas above, the proof of Theorem~\ref{Ramseyfull} is complete.

\subsection*{Acknowledgements}

The first author was partially supported by NSFC under grant number 11601527. The second author was partially supported by NSFC under grant numbers 2161141003 and 11931006.

\end{document}